\newtheorem{assumption}{Assumption}[section]
\journalname{JNVA}
\begin{document}

\title{Second-Order Strong Optimality and Second-Order Duality for Nonsmooth Constrained Multiobjective  Fractional Programming Problems}

\titlerunning{Second-order Strong Optimality and Duality for Nonsmooth Constrained MFP}

\author{Jiawei Chen  \and Luyu Liu \and Yibing Lv \and Debdas Ghosh \and Jen-Chih Yao}

\institute{
Jiawei Chen \and Luyu Liu
\at
School of Mathematics and Statistics, Southwest University,  Chongqing 400715, China\\
Emails:  J.W.Chen713@163.com (J. Chen),\,  lyliu124@163.com (L. Liu)
\and
Yibing Lv
\at
School of Information and Mathematics, Yangtze University, Jingzhou 434023, China. \\
Email:  Yibinglv@yangtzeu.edu.cn
\and
Debdas Ghosh
\at
Department of Mathematical Sciences, Indian Institute of Technology (BHU), Varanasi, Uttar Pradesh 221005, India \\
Email: debdas.mat@iitbhu.ac.in
\and
Jen-Chih Yao
\at
Research Center for Interneural Computing, China Medical University, Taichung 40447, Taiwan\\
Email: yaojc@mail.cmu.edu.tw
}

\date{Received: date / Accepted: date}

\maketitle

\begin{abstract}
This paper investigates constrained \emph{nonsmooth multiobjective fractional programming problem} (NMFP) in real Banach spaces. It derives a quotient calculus rule for computing the first- and second-order Clarke derivatives of fractional functions involving locally Lipschitz functions. A novel second-order Abadie-type regularity condition is presented, defined with the help of the Clarke directional derivative and the P\'ales-Zeidan second-order directional derivative. We establish both first- and second-order strong necessary optimality conditions,  which contain some new information on multipliers and imply the strong KKT necessary conditions, for a Borwein-type properly efficient solution of NMFP by utilizing generalized directional derivatives. Moreover, it derives second-order sufficient optimality conditions for NMFP under a second-order generalized convexity assumption. Additionally, we derive duality results between NMFP and its second-order dual problem under some appropriate conditions.
\end{abstract}

\keywords{Multiobjective fractional programming \and Second-order optimality conditions \and  Borwein-type properly efficient solution \and Second-order Abadie-type regular condition \and Mond-Weir duality}
\subclass{90C46 \and  90C29 \and 49J52}

\section{Introduction}

\emph{Multiobjective fractional programming} (MFP) represents a crucial model in operations research, finding widespread applications in fields such as computer vision, portfolio optimization, management science, image processing, and communications. Various works have investigated and utilized MFP problems, including those in \cite{Bector1968,Borwein1976,Schaible1983,Luc,Jahn,David,Sunlongchai,ShenYuI,ShenYuII,AnsariKY}. Due to the nonsmooth nature of the functions involved in realistic MFP problems, practical challenges emerged in fields like economics, decision theory, optimal control, scheduling, machine learning, engineering, and game theory. As a result, MFPs have been extensively examined using subdifferentials and directional derivatives, as evident from the studies in \cite{Singh1986,Bector1993,Liu1996,Kim2006,Jayswal2015,Das2022}. Within the optimization theory of MFP, duality theory and optimality conditions constitute fundamental areas of study. Recent research has been attempting to identify both first- and second-order characterizations for efficient points of MFP problems, leveraging a range of first- and second-order directional derivatives.

First- and second-order characterizations for a local weak efficient solution of NMFP were derived in \cite{Khanh2015} by generalized derivatives as the first- and second-order approximations of the involving functions. Khanh and Tung \cite{Khanh2020} investigated the \emph{first-order} Karush-Kuhn-Tucker (KKT) conditions for a local Borwein-type proper efficient solution of nonsmooth semi-infinite multiobjective programming by a Mangasarian-Fromovitz regularity condition.
Su and Hang \cite{VanSu2022} applied Hadamard directional derivative to establish a first-order quotient rule and derived first-order conditions for local weak efficient solutions of NMFP. They also derived, in \cite{VanSu2023}, second-order conditions for strict local efficient solutions of MFP by utilizing second-order P\'ales-Zeidan-type upper directional derivative. The duality results and first-order sufficient optimality conditions for single-objective fractional programming problems were obtained under the ($F,\alpha,\rho,d$)-convexity assumption in \cite{Liang2001}. Further, in \cite{Liang2003}, the results proposed in  \cite{Liang2001} were extended for smooth multiobjective fractional programming problems under a convexity assumption. In \cite{Yuan2006}, a notion of ($C,\alpha,\rho,d$)-convexity was introduced and applied to investigate the first-order sufficient optimality conditions and duality for nonsmooth minimax fractional programming problems. The first-order optimality conditions and duality for nondifferentiable MFP problems were also explored in \cite{Chinchuluun2007}. Very recently, in \cite{Pokharna2023}, duality results and optimality conditions for $E$-minimax fractional programming were investigated assuming the $E$-invexity of the involved functions, and applied to multiobjective optimization.

\begin{framed}
To the best of the authors' knowledge, only very few results exist concerning the second-order duality and second-order strong necessary optimality conditions for NMFPs. This study focuses specifically on the realm of Borwein-type proper efficient solutions for NMFP problems, which are known to be more potent than (weak) efficient solutions. By leveraging the Clarke directional derivative and second-order P\'ales-Zeidan generalized directional derivative, we aim to explore second-order characterizations of Borwein-type proper efficient solutions and report second-order Mond-Weir duality theory for NMFP problems involving both equality and inequality constraints. Notably, our investigation considers the locally Lipschitz continuous functions rather than those that are Fr\'echet differentiable.
\end{framed}

Towards deriving second-order duality results and strong optimality conditions, we propose a \emph{generalized second-order Abadie-type regularity condition} (GSOARC) with the help of P\'ales-Zeidan second-order generalized directional derivatives, which do not require the continuous Fr\'echet differentiability and existence of second-order directional derivatives of all involved functions that were a requirement in \cite{Feng2019}. The first- and second-order strong KKT optimality conditions for Borwein proper efficient solutions of NMFP problems are reported under mild conditions. We show that the GSOARC cannot be relaxed to the generalized second-order Guignard-type regularity condition in obtaining the derived second-order strong optimality conditions. Furthermore, sufficient optimality conditions for NMFP are derived under a generalized second-order convexity assumption. Additionally, the Mond-Weir-type second-order duality results for NMFP are derived. The weak, strong, and converse duality results between NMFP and its second-order dual problem are identified under some suitable conditions.

The rest of the paper is demonstrated as follows. In Section \ref{sec:2}, we recollect some basic notions and present quotient calculus for locally Lipschitz functions. We introduce a GSOARC and investigate the first- and second-order strong KKT conditions for a Borwein-type properly efficient solution of NMFP in Section \ref{sec:3}. Subsequently, second-order sufficient optimality conditions are presented in Section \ref{sec:4}  under some second-order generalized convexity assumptions. In Section \ref{sec:5}, the second-order Mond-Weir-type dual problem is introduced. We also study duality results between NMFP  and the corresponding second-order dual problem. Finally, we give the conclusions in Section \ref{sec:6}.

\section{Preliminaries}\label{sec:2}
Let $Z$ be a real reflexive Banach space with the norm $\|\cdot\|$, and $U$ be a nonempty open subset of $Z$.
 For $A \subseteq Z$, its closure, topological interior, and convex hull are denoted by $\mbox{cl}\, A$, $\text{int}\, A$, and $\mbox{co}\, A$, respectively.
 Let $Z^*$ be the topological dual space of $Z$,  and $\langle \cdot, \cdot \rangle$ denote the coupling between $Z^*$ and $Z$.
 We denote the nonnegative orthant and positive orthant of  $\mathbb{R}^n$ by $\mathbb{R}^{n}_{+}$ and $\mathbb{R}^n_{++}$, respectively.
 Let
$$ F:=(F_{1},F_{2},\ldots,F_{p})^{\top}: Z  \rightarrow \mathbb{R}^{p}, $$
$$ f:=(f_{1},f_{2},\ldots,f_{p})^{\top}: Z  \rightarrow \mathbb{R}^{p}, $$
$$ g:=(g_{1},g_{2},\ldots,g_{m})^{\top}: Z  \rightarrow \mathbb{R}^{m} $$
 $$\,\text{and}\,
  h:=(h_{1},h_{2},\ldots,h_{l})^{\top}: Z  \rightarrow \mathbb{R}^{l}$$
be vector-valued functions, and let $F(x) \in \mathbb{R}^{p}_{++}$ and $\frac{f(x)}{F(x)} :=\left( \frac{f_1 (x)}{F_1 (x)},\frac{f_2 (x)}{F_2 (x)},\ldots, \frac{f_p (x)}{F_p (x)}\right)^{\top}$ for all $x\in U$,  where
the superscript $\top$ denotes the transpose.
For $x=(x_1,x_2,\ldots,x_p)^{\top}$ and $y=(y_1,y_2,\ldots,y_p)^{\top}\in \mathbb{R}^p$, we undertake the following conventional notations:
\begin{align*}
&x=y \,\,\,\,\,\,\,\,\,~ \Longleftrightarrow\,\, x_i=y_i,\, i=1,2,\ldots,p,\\
&x \leqq y \,\,\,\,\,\, \Longleftrightarrow\,\, x - y \in - \mathbb{R}^{p}_{+},\\
&x\le y\,\,\,\,\,~ \Longleftrightarrow\,\, x- y \in -\mathbb{R}^{p}_{+}\setminus \{0\}, \\
&x < y \,\,\,\,\,\, \Longleftrightarrow\,\, x - y \in - \mathbb{R}^{p}_{++},\\
&z = x * y \,  \Longleftrightarrow\,\, z = (x_1 y_1, x_2 y_2, \ldots, x_p y_p)^\top \\
\text{ and } & w = x^\top y \, \Longleftrightarrow\,\, w = x_1y_1 + x_2 y_2 + \cdots + x_p y_p.
\end{align*}
The relations $y \geqq x$, $y \ge x$, and $y > x$ simply mean $x \leqq y$, $x \le y$ and $x< y$, respectively.
For brevity, we use the following notations:
\[I:=\{1,2,\ldots ,p\},~ J:=\{1,2,\ldots ,m\},~ K:=\{1,2,\ldots ,l\}\] and  for each $x\in U$, we denote
\begin{align*}
& \left( \frac{f}{F} \right) (x) :=\frac{f(x)}{F(x)},\,\, \left( \frac{f_i}{F_i} \right) (x) :=\frac{f_i( x)}{F_i( x)},\,\,\, i\in I \\
\text{ and } & \left( \frac{1}{F} \right) (x) :=\left( \frac{1}{F_1(x)}, \frac{1}{F_2(x)}, \ldots ,\frac{1}{F_p(x)} \right)^{\top}.
\end{align*}

\begin{framed}
\noindent
In this study, we focus on the \emph{first- and second-order Borwein proper optimality} and \emph{second-order Mond-Weir duality} of the  following nonsmooth multiobjective fractional programming problem:
\begin{equation}\label{mfp:1}
\begin{aligned}
\mbox{(NMFP)}\quad
\left\{ \begin{array}{lll}
\min\,\, & \frac{f(x)}{F(x)} \\
\mbox{s.t.}\, &
g(x)  \leqq 0,\,\, h( x)  =0,\, x\in U.
\end{array} \right.
\end{aligned}
\end{equation}
The set of all feasible points of NMFP is denoted  by $X$, i.e.,
 \begin{align}\label{def_of_X}
   X:=\left\{ x\in U\,:\, g(x) \leqq 0,\, h( x ) =0 \right\}.
   \end{align}
We also consider the following parametric problem associated with NMFP:
\begin{equation}\label{smfp:1}
\begin{aligned}
\mbox{($s$-MFP)}\quad
\left\{ \begin{array}{lll}
&\min & \, (f-s\ast F)(x)\\
&\mbox{s.t.}& \,\,
	 g( x)  \leqq 0,\, h( x)  =0,\, x\in U,
\end{array} \right.
\end{aligned}
\end{equation}
where $s:=(s_1,s_2,\ldots,s_p)\in \mathbb{R}^p$ and
$\left( f-s\ast F \right) ( x ) =f(x) -s\ast F( x )$.
\end{framed}

\noindent
We next recall some standard definitions \cite{VanSu2022} and well-known results, which will be useful later.

\begin{definition}\label{def:6}
An element $x_0\in X$ is said to be
\begin{enumerate}[(i)]
\item a \emph{Pareto efficient} solution of NMFP iff, there is no $x\in X$ such that
  $$\left( \frac{f}{F} \right) ( x ) \le \left( \frac{f}{F} \right) ( x_0 ).$$

\item  \emph{weak efficient solution} of NMFP iff, there is no $x\in X$ such that
$$\left( \frac{f}{F} \right) ( x ) < \left( \frac{f}{F} \right) ( x_0 ).$$
\end{enumerate}

Similarly, one can define (weak) Pareto efficient solutions of $s$-MFP by substituting $(f-s\ast F)$ for $\left( \frac{f}{F} \right)$.
\end{definition}

The following result shows an equivalence between the (weak) efficient solutions of NMFP and $s$-MFP.
\begin{lemma}\label{lem:2.1}
A feasible solution $x_0\in X$ is a (weak) Pareto efficient solution of {\rm NMFP} if and only if it is a (weak) Pareto efficient solution of {\rm $s$-MFP}, where $s :=\frac{f(x_0)}{F(x_0)}$.
\end{lemma}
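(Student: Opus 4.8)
The plan is to exploit two facts: the problems NMFP and $s$-MFP share exactly the same feasible set $X$, so efficiency is governed solely by the objective values at feasible points; and, for the specific choice $s = \frac{f(x_0)}{F(x_0)}$, the $s$-MFP objective vanishes at $x_0$, since its $i$-th component equals $f_i(x_0) - \frac{f_i(x_0)}{F_i(x_0)} F_i(x_0) = 0$, giving $(f - s \ast F)(x_0) = 0$. Because (weak) Pareto efficiency is defined through the non-existence of a dominating feasible point, it suffices to prove that, for every $x \in X$, the dominance relation for the fractional objective holds if and only if the corresponding relation for the $s$-MFP objective holds.

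First I would translate the orderings componentwise. Fix $x \in X$ and $i \in I$, and write $s_i = \left(\frac{f_i}{F_i}\right)(x_0)$. Since $F(x) \in \mathbb{R}^p_{++}$ by hypothesis, we have $F_i(x) > 0$, so multiplying by $F_i(x)$ preserves the direction of each inequality:
\[
\frac{f_i(x)}{F_i(x)} \leqq s_i \Longleftrightarrow f_i(x) - s_i F_i(x) \leqq 0, \qquad \frac{f_i(x)}{F_i(x)} < s_i \Longleftrightarrow f_i(x) - s_i F_i(x) < 0.
\]
These are precisely the componentwise comparisons of $\left(\frac{f}{F}\right)(x)$ with $\left(\frac{f}{F}\right)(x_0)$ on the left and of $(f - s \ast F)(x)$ with $(f - s \ast F)(x_0) = 0$ on the right.

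Next I would assemble these equivalences into the vector relations. For the efficient (Pareto) case, $\left(\frac{f}{F}\right)(x) \le \left(\frac{f}{F}\right)(x_0)$ means $\frac{f_i(x)}{F_i(x)} \leqq s_i$ for all $i \in I$ with strict inequality for at least one index; by the component equivalences this holds if and only if $f_i(x) - s_i F_i(x) \leqq 0$ for all $i$ with at least one strict, that is, $(f - s \ast F)(x) \le (f - s \ast F)(x_0)$. For the weak case, $\left(\frac{f}{F}\right)(x) < \left(\frac{f}{F}\right)(x_0)$ means strict inequality in every component, which transforms identically into $(f - s \ast F)(x) < (f - s \ast F)(x_0)$.

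Finally I would conclude by negation: a feasible point dominating $x_0$ in NMFP exists if and only if a feasible point dominating $x_0$ in $s$-MFP exists, so the absence of the former is equivalent to the absence of the latter, which is exactly the asserted equivalence of (weak) Pareto efficiency. The argument is elementary and presents no genuine obstacle; the only point demanding care is to carry both halves of the relation $\le$---the componentwise $\leqq$ together with at least one strict inequality---through the equivalence intact, which the strict positivity of every $F_i(x)$ secures.
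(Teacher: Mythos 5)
Your proof is correct and follows essentially the same route as the paper: both arguments rest on the observation that $(f - s\ast F)(x_0)=0$ and that multiplying (or dividing) componentwise by the strictly positive $F_i(x)$ preserves the ordering, so the dominance relations for the two problems are equivalent over the common feasible set. The only cosmetic difference is that you carry out the equivalence componentwise and treat the weak case explicitly, whereas the paper phrases it as membership in the cone $-\mathbb{R}^p_+\setminus\{0\}$ and cites an external reference for the weak case.
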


\begin{proof}
A proof for $x_0$ being a weak Pareto point is given in \cite[Proposition 1]{VanSu2022}. For a proof of $x_0$ being a Pareto efficient point, we observe for any $x\in U$ with $x \neq x_0$ that
\begin{align*}
&\left( \frac{f}{F} \right) \left( x \right) -  \left( \frac{f}{F} \right) \left( x_0 \right) \not\in - \mathbb{R}^{p}_{+}\setminus\{0\}\\
\Longleftrightarrow \,&\left( \frac{f}{F} \right) \left( x \right) - s \not\in - \mathbb{R}^{p}_{+}\setminus\{0\} \\
\Longleftrightarrow\, & \frac{1}{F(x)}(f(x) - s \ast F(x)) \not\in - \mathbb{R}^{p}_{+}\setminus\{0\}\\
\Longleftrightarrow \,&f(x)- s \ast F(x) \not\in - \mathbb{R}^{p}_{+} \setminus\{0\} \text{ because } \mathbb{R}^{p}_{+} \text{ is a cone and } F(x) \in \mathbb{R}^p_{++} \\
\Longleftrightarrow \, &(f- s \ast F)(x) \not\le (f- s \ast F)(x_0).
\end{align*}
Hence, the result follows.  The proof is completed.
\end{proof}

\begin{definition}\label{def:1}
\cite{Clarke1983}
A function $\vartheta: U\rightarrow \mathbb{R}$ is said to be   \emph{G\^ateaux differentiable} at $x_0\in U$ in a direction $v_0 \in Z$ iff,
 the limit
\begin{equation}\label{gds:1}
\lim_{t\rightarrow 0^+}\frac{\vartheta( x_0 + tv_0 ) - \vartheta(x_0)}{t}
\end{equation}
exists. If the limit exists, it is denoted by $\vartheta'(x_0; v_0)$ and called the G\^ateaux derivative of $\vartheta$ at $x_0$ along the direction $v_0$.

Let $x_0 \in U$. If $\vartheta'(x_0; v)$ exists for any $v \in Z$ and there is a continuous linear function $\vartheta'_{G}( x_0): Z \rightarrow \mathbb{R}$ such that
\[
\vartheta'(x_0; v) =  \langle \vartheta'_{G}(x_0), v \rangle ~\text { for each } v\in Z,
\]
then we say that $\vartheta'_{G}(x_0)$ is the G\^ateaux derivative of $\vartheta$ at $x_0$.

A vector-valued function $\vartheta:=\left(\vartheta_1, \vartheta_2, \ldots, \vartheta_p \right)^{\top}\,:\, U\rightarrow \mathbb{R}^p$ is called G\^ateaux differentiable at $x_0\in U$ if its components $\vartheta_i, i\in I$, are G\^ateaux differentiable at $x_0$.
\end{definition}

\begin{remark}
\begin{enumerate}[(i)]
\item
In some instances, as discussed in \cite{Jahn}, the limit described in \eqref{gds:1} can be alternatively expressed with $t\rightarrow 0$. However, for consistency with other definitions of directional derivatives, we use the one-sided limit in this paper.

\item
It cannot be generally concluded that $\vartheta$ is continuous at $x$ even if $\vartheta$ is G\^ateaux differentiable at $x$.

\item
If $\vartheta$ is Fr\'echet differentiable at $x_0$, then $\vartheta$ is G\^ateaux differentiable at $x_0$ and $\nabla \vartheta( x_0 ) =\vartheta'_{G}( x_0)$, where $\nabla \vartheta( x_0)$ is the Fr\'echet derivative of $\vartheta$ at $x_0$.
\end{enumerate}
\end{remark}

\begin{definition}\label{def:3}\cite{Clarke1983}
Let $\vartheta: U\rightarrow \mathbb{R}$ be a real-valued function and $x_0 \in U$.
\begin{enumerate}[(i)]
\item
The (Clarke) \emph{generalized directional derivative} of $\vartheta$ at $x_0 \in U$ in the direction $v_0\in Z$ is defined by
\begin{align*}
\vartheta^{\circ}(x_0; v_0) := \limsup_{y\rightarrow x_0,~ t\rightarrow 0^+}\frac{\vartheta( y_0 + tv_0 ) - \vartheta( y_0 )}{t}.
\end{align*}

\item $\vartheta$ is said to be \emph{regular} in the sense of Clarke at $x_0$ if  $\vartheta'( x_0; v )$ exists and $\vartheta'( x_0; v ) = \vartheta^{\circ}( x_0; v )$ holds for all $v\in Z$. \\
\end{enumerate}
\end{definition}

For any given $x_0\in X$, with regard to the problem \eqref{mfp:1}, we denote
\begin{align*}
J( x_0 ) :=\left\{ j\in J\,:\,g_j(x_0) =0 \right\},\,\,
J( x_0,v ) :=\left\{ j\in J(x_0) \,:\, g^\circ_j( x_0; v)=0 \right\},
\end{align*}
and
\begin{align}\label{def_of_Q}
Q:=\left\{ x\in Z \,:\, g( x ) \leqq 0,\, h( x ) = 0,\, f( x ) \leqq_C s\ast F(x) \right\},
\end{align}
where  $s= \left( \frac{f}{F} \right) (x_0) $ and  $g^\circ_j( x_0; v)$ is the
 Clarke directional derivative (Definition \ref{def:3}) of $g_{j}$ at $x_0$ in the direction $v\in Z$.

\begin{definition}\label{def:7} \cite{Jahn,Clarke1983}
Let $X$ and $Q$ be as in \eqref{def_of_X} and \eqref{def_of_Q}, respectively,  and $X\ne\emptyset$.
\begin{enumerate}[(i)]
  \item
  The \emph{contingent cone} of $X$ at $x_0\in  X$ is defined by
  \begin{equation*}
  T( X,x_0) :=\left\{ d\in Z \,:\, \exists\, t_k\rightarrow 0^+,\, \exists\, d_k\rightarrow d  \,\text{ such that }\,x_0+t_kd_k\in X,\,\forall\, k\in \mathbb{N} \right\}.
  \end{equation*}

  \item
  The \emph{linearizing cone} of $Q$ at $x_0\in X$ is defined by
\begin{equation*}
C\left( Q,x_0 \right)
 := \left\{\begin{array}{lll}
    d\in Z \, : & \left( \frac{f}{F} \right)^\circ_i ( x_0;d ) \leq 0, &~~ i\in I, \\
    &~  g_j^\circ ( x_0; d) \leq 0, &~~ j\in J(x_0), \\
                          &~  h_k^\circ ( x_0;d ) =0, &~~ k \in K
    \end{array}\right\}.
\end{equation*}
\end{enumerate}
\end{definition}
The set $T \left(X, x_0 \right)$ is a nonempty and closed cone with $0\in T( X,x_0)$.
It is well-known that  if $x_0\in \mbox{cl} X_1\subseteq \mbox{cl} X_2 \subseteq X$,
 then $T( X_1,x_0)\subseteq T( X_2,x_0 )$; see e.g., \cite{Jahn}.

\begin{definition}
A vector $v\in Z$ is called a \emph{critical point} at $x_0\in X$ iff
$$v\in T(X,x_0)\bigcap C(Q,x_0).$$
\end{definition}

\noindent
We denote the \emph{set of all critical points} at $x_0 \in X$ by $D(x_0)$, i.e.,
\begin{align*}
 D(x_0):=\{ v\in Z \,:\,   v\in T(X,x_0)\bigcap C(Q,x_0)  \}.
\end{align*}

\begin{definition}\cite{Ivanov2015,Pales1994}\label{def:2}
Let  $\vartheta: U\rightarrow \mathbb{R}$ be a function and $x_0\in U$.
\begin{enumerate}[(i)]
\item
The \emph{second-order directional derivative} of $\vartheta$ at $x_0$ in the direction $v_0 \in Z$ is defined by
$$\vartheta''\left( x_0; v_0 \right) := \lim_{t\rightarrow 0^+}\frac{\vartheta\left( x_0 + tv_0 \right) -\vartheta\left( x_0 \right) -t\vartheta' \left( x_0; v_0 \right)}{\tfrac{1}{2}t^2}, \text{ provided limit exists}. $$
\item
 The \emph{P\'ales-Zeidan second-order generalized directional derivative} of $\vartheta$ at $x_0$ in the direction $v_0 \in Z$ is defined by
$$\vartheta^{\circ\circ}\left( x_0; v_0 \right) := \limsup_{t\rightarrow 0^+}\frac{\vartheta\left( x_0 + tv_0 \right) -\vartheta\left( x_0 \right) -t\vartheta^{\circ}\left( x_0; v_0 \right)}{\tfrac{1}{2}t^2}.$$
\end{enumerate}
\end{definition}

\begin{remark}\label{rem:2.2} \cite{Pales1994}
\begin{enumerate}[(i)]
\item
If $\vartheta$ is twice Fr\'echet differentiable at $x_0$, then
\begin{align*}
\langle \nabla ^2 \vartheta( x_0 ) v, v \rangle = \vartheta''( x_0;v )=\vartheta^{\circ\circ}( x_0; v), \,\forall\, v \in Z.
\end{align*}

\item
If $\vartheta'( x_0; v_0)=\vartheta^{\circ}( x_0; v_0 )$ and $\vartheta''( x_0; v_0 )$ exists, then $\vartheta''( x_0; v_0 )=\vartheta^{\circ\circ}( x_0; v_0)$. Besides, for any $\beta > 0$, $(\beta \vartheta)'( x_0; v_0)= \beta \vartheta'( x_0; v_0)$, $(\beta\vartheta)^{\circ}( x_0; v_0)= \beta\vartheta^{\circ}( x_0; v_0)$,  $(\beta \vartheta)''( x_0; v_0)= \beta \vartheta''(x_0; v_0)$ and $(\beta\vartheta)^{\circ\circ}(x_0; v_0)= \beta\vartheta^{\circ\circ}(x_0; v_0)$.
\end{enumerate}
\end{remark}

\begin{definition}\label{def:4}\cite{Clarke1983}
A function $\vartheta: U \rightarrow \mathbb{R}$ is called locally Lipschitz continuous at $x_0 \in U$ if there exists a neighborhood $V(x_0)$ and a positive constant $L(x_0)$ such that
\begin{align*}
| \vartheta( y ) -\vartheta( z ) | \leq L(x_0) \| y-z \|,\,\forall\, y,z\in  V(x_0) \cap U.
\end{align*}
The function $\vartheta$ is called locally Lipschitz continuous on $U$ if it is locally Lipschitz continuous at every $x\in U$. In particular, if the positive constant $L(x)$ is independent of $x\in U$, then $\vartheta$ is called Lipschitz continuous on $U$.
\end{definition}

\begin{definition}\label{def:5}\cite{Clarke1983}
Let $\vartheta: U \rightarrow \mathbb{R}$ be locally Lipschitz continuous on $U$. The Clarke subdifferential of $\vartheta$ at $x_0\in U$ is defined by
\begin{equation}\label{cwf:1}
\partial \vartheta( x_0 ) := \left\{\xi\in Z^{\ast} \,:\, \langle \xi, v \rangle \le \vartheta^{\circ}( x_0; v),\,\forall\, v\in Z \right\}.
\end{equation}
\end{definition}

\begin{lemma}\label{yl:7} {\rm\cite{Ivanov2010}}
Let $\vartheta: U \rightarrow \mathbb{R}$ be locally Lipschitz continuous at $x_0 \in U$. Then there exists a constant $L_0>0$ such that $\| \xi \| \le L_0$ for arbitrary $\xi \in \partial \vartheta(x_0)$.
\end{lemma}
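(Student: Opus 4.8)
The plan is to use the local Lipschitz property to bound the Clarke generalized directional derivative uniformly in the direction, and then push this bound through the defining inequality of the Clarke subdifferential to control the dual norm of every subgradient.

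First I would invoke Definition \ref{def:4}: since $\vartheta$ is locally Lipschitz continuous at $x_0$, there exist a neighborhood $V(x_0)$ and a constant $L_0>0$ such that $|\vartheta(y)-\vartheta(z)|\le L_0\|y-z\|$ for all $y,z\in V(x_0)\cap U$. Fix an arbitrary direction $v\in Z$. For $y$ sufficiently close to $x_0$ and $t>0$ sufficiently small, both $y$ and $y+tv$ lie in $V(x_0)\cap U$, so the Lipschitz estimate gives
$$\frac{\vartheta(y+tv)-\vartheta(y)}{t}\le \frac{L_0\|tv\|}{t}=L_0\|v\|.$$
Taking the $\limsup$ as $y\to x_0$ and $t\to 0^+$ in accordance with Definition \ref{def:3} then yields $\vartheta^{\circ}(x_0;v)\le L_0\|v\|$ for every $v\in Z$.

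Next I would feed this estimate into the definition of the Clarke subdifferential (Definition \ref{def:5}). For any $\xi\in\partial\vartheta(x_0)$ and any $v\in Z$, the defining inequality $\langle\xi,v\rangle\le\vartheta^{\circ}(x_0;v)$ combines with the previous bound to give $\langle\xi,v\rangle\le L_0\|v\|$. Since this holds for all $v$, restricting to $\|v\|\le 1$ and recalling that $\|\xi\|=\sup_{\|v\|\le 1}\langle\xi,v\rangle$ in the dual space $Z^{\ast}$ produces $\|\xi\|\le L_0$. As $L_0$ depends only on the local Lipschitz data of $\vartheta$ at $x_0$ and not on the particular $\xi$, this is precisely the asserted uniform bound.

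I do not expect a genuine obstacle here; the only technical point requiring care is ensuring that $y+tv$ remains inside $V(x_0)\cap U$ while the double limit is taken, which is guaranteed by the openness of $U$ together with restricting $y$ to a smaller neighborhood of $x_0$ and $t$ to a sufficiently small positive range. The crux of the lemma is simply that the same constant $L_0$ may serve both as the local Lipschitz constant of $\vartheta$ and as the uniform bound on $\partial\vartheta(x_0)$.
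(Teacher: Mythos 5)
Your proof is correct: bounding the Clarke directional derivative by $L_0\|v\|$ via the local Lipschitz estimate and then passing to the dual-norm supremum over $\|v\|\le 1$ in the defining inequality of $\partial\vartheta(x_0)$ is exactly the standard argument for this fact. The paper itself gives no proof (it cites the result from the literature), so there is nothing to contrast with; your reasoning, including the care taken to keep $y$ and $y+tv$ inside $V(x_0)\cap U$ during the double limit, is complete.
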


\begin{remark}\label{regular:Lip}
\begin{enumerate}
\item[{\rm (i)}]
    The existence of $\vartheta^{\circ}( x_0;v_0 )$ does not necessarily imply the existence of $\vartheta'( x_0;v_0 )$, and even if $\vartheta'( x_0;v_0 )$ exists, they may not be equal. However, if $\vartheta$ is a continuously Fr\'echet differentiable function at $x_0$ and the Fr\'echet derivative is $\nabla \vartheta( x_0 )$, then it is regular in the sense of Clarke, i.e.,
    $$\vartheta'( x_0;v_0 )=\vartheta^{\circ}( x_0;v_0 )= \langle \nabla \vartheta( x_0 ), v_0\rangle = \langle \vartheta'_G( x_0 ), v_0\rangle.$$

\item[{\rm (ii)}]
    If $\vartheta$ is locally Lipschitz continuous on $U$, then $\vartheta^{\circ}( x;v )$ exists (finite) for any $x\in U$ and $v\in Z$, which implies that $\partial \vartheta( x )$ is compact. Besides, if $\vartheta_{1},\vartheta_{2}: U \rightarrow \mathbb{R}$ are locally Lipschitz continuous and regular in the sense of Clarke at $x_0\in U$ and $\vartheta_{2}(x)\neq 0$ for all $x\in U$, then $\frac{\vartheta_{1}}{\vartheta_{2}}: U \rightarrow \mathbb{R}$ is regular in the sense of Clarke at $x_0\in U$, i.e.,  for each $v\in Z$,
    \begin{align*}
     \left( \frac{\vartheta_{1}}{\vartheta_{2}} \right)^\circ( x_0; v)
     =\left( \frac{\vartheta_{1}}{\vartheta_{2}} \right)'( x_0; v)
     =\frac{\vartheta'_{1}(x_0;v)}{\vartheta_{2}(x_0)}-  \frac{\vartheta_{1}(x_0)}{\vartheta^{2}_{2}(x_0)} \vartheta'_{2}(x_0;v).
    \end{align*}
\end{enumerate}
\end{remark}

Next, before we end the section, we present chain rules of quotient functions involving locally Lipschitz functions in terms of generalized directional derivatives.

\begin{proposition}\label{pro:2.1}
Let $\vartheta_{1}$ and $\vartheta_{2}$ be two real-valued locally Lipschitz continuous functions on $U$ with $\vartheta_{2}$ being positive-valued.
For any $x_0\in U$, $v\in Z$ and $\beta \in \mathbb{R}_{+}$, the following are true:
\begin{enumerate}
\item[{\rm (i)}] $\vartheta_{1}- \beta \vartheta_{2}$ and $\frac{\vartheta_{1}}{\vartheta_{2}}$ are locally Lipschitz continuous on $U$;

\noindent
If, in addition, $\vartheta_{2}$ is G\^ateaux differentiable at $x_0$ with G\^ateaux derivative $\vartheta'_{2,G}(x_0)$, then

\item[{\rm (ii)}] $\left( \frac{\vartheta_{1}}{\vartheta_{2}} \right)^\circ \left( x_0; v \right)=\frac{1}{\vartheta_{2}(x_0)}\left[\vartheta_{1}^\circ(x_0;v)-\frac{\vartheta_{1}(x_0)}{\vartheta_{2}(x_0)} \langle \vartheta'_{2,G}(x_0), v \rangle\right]$;

\item[{\rm (iii)}]  $(\vartheta_{1}- \beta \vartheta_{2})^\circ(x_0;v)=\vartheta_{1}^\circ(x_0;v)-\beta \langle  \vartheta'_{2,G}(x_0), v\rangle$;

\noindent
Moreover, if $\vartheta''_{2}(x_0;v)$ exists, then
\item[{\rm (iv)}] $\left( \frac{\vartheta_{1}}{\vartheta_{2}} \right)^{\circ\circ} \left( x_0;v \right)=\frac{1}{\vartheta_{2}(x_0)}\left[\vartheta_{1}^{\circ\circ}(x_0;v)-\frac{\vartheta_{1}(x_0)}{\vartheta_{2}(x_0)}\vartheta''_{2}\left( x_0;v \right)\right]$;

\item[{\rm (v)}] $(\vartheta_{1}-\beta \vartheta_{2})^{\circ\circ}(x_0;v)=\vartheta_{1}^{\circ\circ}(x_0;v)-\beta \vartheta''_{2}\left( x_0;v \right)$.
\end{enumerate}
\end{proposition}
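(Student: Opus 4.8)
The plan is to derive (ii)--(v) from the defining limits of the Clarke and P\'ales--Zeidan derivatives applied to one algebraic identity for the quotient (and the trivial identity for the difference), treating the five items in increasing order of difficulty. For (i), $\vartheta_1-\beta\vartheta_2$ is locally Lipschitz because scalar multiples and sums of locally Lipschitz functions are again locally Lipschitz. For the quotient I would first use that $\vartheta_2$, being continuous and positive-valued, is bounded below by a positive constant on a neighborhood $V(x_0)$, while $\vartheta_1$ is locally bounded there; the identity
\[
\frac{\vartheta_1(y)}{\vartheta_2(y)}-\frac{\vartheta_1(z)}{\vartheta_2(z)}=\frac{\vartheta_2(z)\bigl(\vartheta_1(y)-\vartheta_1(z)\bigr)+\vartheta_1(z)\bigl(\vartheta_2(z)-\vartheta_2(y)\bigr)}{\vartheta_2(y)\vartheta_2(z)}
\]
then yields a local Lipschitz estimate for $\frac{\vartheta_1}{\vartheta_2}$ by combining the Lipschitz constants of $\vartheta_1,\vartheta_2$ with the lower bound on the denominator.

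For the first-order formulas (ii) and (iii), I would insert the same identity into the Clarke difference quotient of $\frac{\vartheta_1}{\vartheta_2}$ at $(y,t)$ and split it as
\[
\frac{1}{\vartheta_2(y+tv)}\cdot\frac{\vartheta_1(y+tv)-\vartheta_1(y)}{t}-\frac{\vartheta_1(y)}{\vartheta_2(y+tv)\vartheta_2(y)}\cdot\frac{\vartheta_2(y+tv)-\vartheta_2(y)}{t}.
\]
Taking $\limsup_{y\to x_0,\,t\to0^+}$, the prefactors converge by continuity, the first difference quotient contributes $\vartheta_1^\circ(x_0;v)$, and the full $\limsup$ decouples into the claimed form \emph{provided} the second difference quotient $\frac{\vartheta_2(y+tv)-\vartheta_2(y)}{t}$ converges to $\langle\vartheta_{2,G}'(x_0),v\rangle$ in the joint limit. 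I expect this convergence --- not merely a one-sided estimate --- to be the main first-order obstacle, and it is precisely where the G\^ateaux differentiability of $\vartheta_2$, together with the local Lipschitz property to control the dependence on $y$, must be used so that the $\limsup$ of the sum separates. Item (iii) is the additive analogue of the same decoupling, with the difference in place of the quotient so that no prefactors survive.

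For the second-order formulas I would exploit that the P\'ales--Zeidan derivative uses a single limit $t\to0^+$ with base point fixed at $x_0$. The hypothesis that $\vartheta_2''(x_0;v)$ exists supplies the genuine expansion $\vartheta_2(x_0+tv)=\vartheta_2(x_0)+t\langle\vartheta_{2,G}'(x_0),v\rangle+\tfrac12 t^2\vartheta_2''(x_0;v)+o(t^2)$, whereas the quotient $\bigl(\vartheta_1(x_0+tv)-\vartheta_1(x_0)-t\vartheta_1^\circ(x_0;v)\bigr)/(\tfrac12 t^2)$ has $\limsup$ equal to $\vartheta_1^{\circ\circ}(x_0;v)$ by definition. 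For (v) I subtract the first-order value from (iii); since the $\vartheta_2$-contribution \emph{converges} to $\vartheta_2''(x_0;v)$, the $\limsup$ of the difference separates at once into $\vartheta_1^{\circ\circ}(x_0;v)-\beta\vartheta_2''(x_0;v)$, with no cross terms, so this case is immediate. For (iv) I substitute both expansions into the quotient, expand $1/\vartheta_2(x_0+tv)$ as a geometric series to second order, subtract the first-order value from (ii), and divide by $\tfrac12 t^2$. Here the main obstacle is the bookkeeping of the product terms coupling the first-order parts of $\vartheta_1$ and $\vartheta_2$: these carry factors of $\langle\vartheta_{2,G}'(x_0),v\rangle$ and of $\frac{\vartheta_1(x_0+tv)-\vartheta_1(x_0)}{t}$, and the delicate final step is to show that they collapse to the compact form claimed, which I expect to accomplish by feeding the first-order relation from (ii) back in to absorb the surviving cross terms.
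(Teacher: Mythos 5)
Your handling of (i) coincides with the paper's argument (the same algebraic identity for the quotient plus a local positive lower bound on $\vartheta_2$), and your route to (v) --- substitute the first-order formula (iii) into the P\'ales--Zeidan quotient and use that the $\vartheta_2$-contribution has an honest limit, so the $\limsup$ of the sum separates --- is exactly the paper's computation. The trouble is in (ii)--(iv), which the paper itself does not prove directly (it defers to Theorems 3.1--3.2 of Su and Hang, where $\vartheta_2$ is assumed \emph{strictly} differentiable in the sense of Clarke, as the remark following the proposition concedes). You correctly isolate the crux of (ii)--(iii): the joint convergence of $\frac{\vartheta_2(y+tv)-\vartheta_2(y)}{t}$ to $\langle\vartheta'_{2,G}(x_0),v\rangle$ as $y\to x_0$, $t\to0^+$. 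But your claim that G\^ateaux differentiability at $x_0$ together with local Lipschitz continuity forces this convergence is false: existence of that joint limit is precisely strict differentiability at $x_0$, which is strictly stronger. The function $\vartheta_2(x)=x^2\sin(1/x)+1$ of Example \ref{second:dir} is Lipschitz near $0$ and differentiable with $\vartheta'_{2,G}(0)=0$, yet $\limsup_{y\to0,\,t\to0^+}\frac{\vartheta_2(y+tv)-\vartheta_2(y)}{t}=|v|$, because $\vartheta_2'$ oscillates through values arbitrarily close to $\pm1$ near the origin. Taking $\vartheta_1\equiv1$ and $\beta=1$ then defeats both decouplings you propose, so this step of your plan cannot be completed under the stated hypotheses; it requires upgrading G\^ateaux to strict (Clarke) differentiability of $\vartheta_2$ at $x_0$.

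A second, independent gap sits in your plan for (iv). After the geometric-series expansion, the second-order quotient contains the cross term $-\tfrac{2}{\vartheta_2(x_0)^2}\,\langle\vartheta'_{2,G}(x_0),v\rangle\cdot\tfrac{\vartheta_1(x_0+tv)-\vartheta_1(x_0)}{t}$ up to $o(1)$. For a merely Lipschitz $\vartheta_1$ the Dini quotient $\tfrac{\vartheta_1(x_0+tv)-\vartheta_1(x_0)}{t}$ is bounded but need not converge, and it is not controlled by $\vartheta_1^\circ(x_0;v)$ nor by the identity (ii), which concerns a different (joint, two-variable) $\limsup$; so ``feeding (ii) back in'' does not absorb this term, and the $\limsup$ will not collapse to the claimed expression, which contains no $\langle\vartheta'_{2,G}(x_0),v\rangle$ at all, unless that pairing vanishes or extra regularity of $\vartheta_1$ is imposed. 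In summary: your (i) and (v) are correct and agree with the paper; for (ii)--(iv) you flagged the right obstacles but did not overcome them, and they are not surmountable from the hypotheses as written.
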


\begin{proof}
(i) By calculation, one has
\begin{align*}
\lVert \left( \vartheta_{1}-\beta \vartheta_{2} \right) \left( x \right) -\left( \vartheta_{1}-\beta \vartheta_{2} \right) \left( z \right) \rVert
&=\lVert \vartheta_{1}\left( x \right) -\vartheta_{1}\left( z \right) -\beta \vartheta_{2} \left( x \right) +\beta \vartheta_{2}\left( z \right) \rVert\\
&\le \lVert \vartheta_{1}\left( x \right) -\vartheta_{1}\left( z \right) \rVert +\beta\lVert \vartheta_{2}\left( x \right) -\vartheta_{2}\left( z \right) \rVert,
\end{align*}
and
\begin{align*}
\left\lVert \left( \frac{\vartheta_{1}}{\vartheta_{2}} \right) \left( x \right) -\left( \frac{\vartheta_{1}}{\vartheta_{2}} \right) \left( z \right) \right\rVert
&=\left\lVert \frac{\vartheta_{1}\left( x \right) -\vartheta_{1}\left( z \right)}{\vartheta_{2}\left( x \right)}-\frac{\vartheta_{1}\left( z \right) \left[ \vartheta_{2}\left( x \right) -\vartheta_{2}\left( z \right) \right]}{\vartheta_{2}\left( x \right) \vartheta_{2}\left( z \right)} \right\rVert\\
&\le \left\lVert \frac{\vartheta_{1}\left( x \right) -\vartheta_{1}\left( z \right)}{\vartheta_{2}\left( x \right)} \right\rVert +\left\| \frac{\vartheta_{1}\left( z \right) \left[ \vartheta_{2}\left( x \right) -\vartheta_{2}\left( z \right) \right]}{\vartheta_{2}\left( x \right) \vartheta_{2}\left( z \right)} \right\|.
\end{align*}
By virtue of Definition \ref{def:4} and $\vartheta_{2}(x), \vartheta_{2}(z)>0$, we obtain that $\vartheta_{1}-\beta \vartheta_{2}$ and  $\frac{\vartheta_{1}}{\vartheta_{2}}$ are locally Lipschitz continuous on $U$. \\

\noindent
(ii)--(iv) The proofs are analogous to \cite[Theorem 3.1 and Theorem 3.2]{VanSu2023} .

\noindent
(v) If $\vartheta''_{2}(x_0;v)$ exists, Definition \ref{def:2} and Remark \ref{rem:2.2} imply that
\begin{align*}
&~~~~( \vartheta_{1}- \beta \vartheta_{2} )^{\circ \circ}( x_0;v ) \\
&=\limsup_{t\rightarrow 0^+}\frac{( \vartheta_{1}- \beta \vartheta_{2} ) ( x_0+tv ) -( \vartheta_{1}- \beta \vartheta_{2} )( x_0 ) - t( \vartheta_{1}-\beta \vartheta_{2} ) ^{\circ}( x_0;v )}{\tfrac{1}{2}t^2}\\
&=\limsup_{t\rightarrow 0^+}\left[ \frac{( \vartheta_{1} - \beta \vartheta_{2} ) ( x_0+tv ) -( \vartheta_{1}- \beta \vartheta_{2})( x_0 )}{\tfrac{1}{2}t^2}
-\frac{t( \vartheta_{1}- \beta \vartheta_{2})^{\circ}( x_0;v)}{\tfrac{1}{2}t^2} \right] \\
&=\limsup_{t\rightarrow 0^+}\left[ \frac{\left( \vartheta_{1}-\beta \vartheta_{2} \right) \left( x_0+tv \right) -\left( \vartheta_{1}-\beta \vartheta_{2} \right) \left( x_0 \right)}{\tfrac{1}{2}t^2}-\frac{t\vartheta_{1}^{\circ}\left( x_0;v \right) -t \beta \langle \vartheta'_{2,G}\left( x_0 \right), v\rangle}{\tfrac{1}{2}t^2} \right] \\
&=\limsup_{t\rightarrow 0^+}\frac{\vartheta_{1}\left( x_0+tv \right) -\vartheta_{1}\left( x_0 \right) - t \vartheta_{1}^{\circ}\left( x_0;v \right)}{\tfrac{1}{2}t^2}- \beta \lim_{t\rightarrow 0^+} \frac{\vartheta_{2}\left( x_0+tv \right) - \vartheta_{2}\left( x_0 \right) -\langle \vartheta'_{2,G}\left( x_0 \right), v\rangle }{\tfrac{1}{2}t^2}\\
&=\vartheta_{1}^{\circ \circ}\left( x_0;v \right) -\beta \vartheta''_{2}\left( x_0;v \right).
\end{align*}
The proof is completed.
\end{proof}

\begin{remark}
In the chain rules for quotient functions involving locally Lipschitz functions presented in \cite{VanSu2023}, the function $\vartheta_{2}$ is taken to be strictly differentiable on $U$ in the sense of Clarke. In comparison, Proposition \ref{pro:2.1} just assumed the existence of the G\^ateaux derivative $\vartheta'_{2,G}(x_0)$. Although in finite-dimensional normed spaces, the Clarke strict derivative $D_s \vartheta_{2}(x)$ and the G\^ateaux derivative $\vartheta'_{2,G}(x)$ coincide, Proposition \ref{pro:2.1} being in a general Banach space is a stronger result than Theorem 3.2 in \cite{VanSu2023}. Furthermore, it is worth noting that Proposition \ref{pro:2.1} (iv) and (v) may not be true if $\vartheta''_{2}(x;v)$ is replaced by $\vartheta_{2}^{\circ\circ}(x;v)$ (see Example \ref{second:dir}).
\end{remark}

\begin{example}\label{second:dir}
Let $\vartheta_{1}, \vartheta_{2}: \mathbb{R}\rightarrow  \mathbb{R}$ be  two functions as follows:
$\vartheta_{1}(x)=x^2+1$
and
\begin{eqnarray*}
\vartheta_{2}\left( x \right) =\left\{ \begin{array}{ll}
	x^2\sin \displaystyle\frac{1}{x}+1, &\hbox{if $x\ne0$},\\
	1, &\hbox{if $x=0$}.\\
\end{array} \right.
\end{eqnarray*}
It is easy to verify that both $\vartheta_{1}$ and $\vartheta_{2}$ are locally Lipschitz continuous on the open interval $U=(-1,1)$. Also, $\vartheta_{2}$ is G\^ateaux differentiable on $U$ and its G\^ateaux derivative is
\[\vartheta'_{2,G}\left( x \right) =\left\{ \begin{array}{ll}
	2x\sin \displaystyle\frac{1}{x}-\cos \frac{1}{x},&\hbox{if $x\ne0$};\\
	0,&\hbox{if $x=0$}.\\
\end{array} \right. \]
After calculation, we obtain that
\begin{align*}
  &\vartheta_{1}^\circ(0;v)=0,  \,\, \vartheta_{1}^{\circ\circ}(0;v)=2v^2,\,\, \vartheta_{2}^\circ(0;v)=\vartheta'_{2}(0;v)=\vartheta'_{2,G}(0)v=0, \\
  & (\vartheta_{1}-\vartheta_{2})^\circ(0;v)=0,\,\,  \vartheta_{2}^{\circ\circ}(0;v)=2v^2, \text{ and } \vartheta''_{2}(0;v) \text{ does not exist}.
\end{align*}
So,
\begin{equation}\label{ex2.1}
\frac{1}{\vartheta_{2}(0)}\left[\vartheta_{1}^{\circ\circ}(0;v)-\frac{\vartheta_{1}(0)}{\vartheta_{2}(0)}\vartheta_{2}^{\circ\circ}\left( 0;v \right)\right]=2v^2-2v^2=0
\end{equation}
and $\vartheta_{1}^{\circ\circ}(0;v)-\vartheta_{2}^{\circ\circ}(0;v)=0$.
However, by Definition \ref{def:2} (ii) and Definition \ref{def:3} (i), one has
\begin{align}\label{ex2.2}
  \left(\frac{\vartheta_{1}}{\vartheta_{2}}\right)^{\circ}(0;v)=0,\,\,
  \left(\frac{\vartheta_{1}}{\vartheta_{2}}\right)^{\circ\circ}(0;v)=4v^2 ~\text{ and }~ (\vartheta_{1}-\vartheta_{2})^{\circ\circ}(0;v)=4v^2.
\end{align}
So, for any $v\neq 0$,  \eqref{ex2.1} and \eqref{ex2.2}  imply that
 \begin{align*}
  \left(\frac{\vartheta_{1}}{\vartheta_{2}}\right)^{\circ\circ}(0;v)> \frac{1}{\vartheta_{2}(0)}\left[\vartheta_{1}^{\circ\circ}(0;v)-\frac{\vartheta_1(0)}{\vartheta_{2}(0)}\vartheta_{2}^{\circ\circ}\left( 0;v \right)\right]
\end{align*}
and
 \begin{align*}
  (\vartheta_{1}-\vartheta_{2})^{\circ\circ}(0;v)> \vartheta_{1}^{\circ\circ}(0;v)-\vartheta_{2}^{\circ\circ}(0;v)=0.
\end{align*}
Therefore, $\vartheta''_{2}(x;v)$  cannot be replaced by $\vartheta_{2}^{\circ\circ}(x;v)$ in Proposition \ref{pro:2.1} (iv) and (v).
\end{example}

\begin{lemma}\label{yl:1}
Let $\vartheta_{1},\vartheta_{2}: U\rightarrow \mathbb{R}$ be locally Lipschitz continuous and regular in the sense of Clarke at $x_{0}\in U$, $\vartheta_{2}$ be positive-valued, G\^ateaux differentiable on $U$ and $\left(\frac{\vartheta_{1}}{\vartheta_{2}}\right)^{\circ\circ}\left( x_{0};v \right)$ exist in the direction $v\in Z$. Assume that $x_n\rightarrow x_0$, $t_n\rightarrow 0^+$, $r_n\rightarrow r$ and $r_n>0$ for all $n \in \mathbb{N}$. If $w_n = \frac{x_n-x_0-t_nv}{\tfrac{1}{2}r_{n}^{-1}t_{n}^{2}}\rightarrow w$, then
\begin{equation*}
  \begin{split}
     & \limsup_{n\rightarrow \infty}\frac{\left(\frac{\vartheta_{1}}{\vartheta_{2}}\right)\left( x_n \right) -\left(\frac{\vartheta_{1}}{\vartheta_{2}}\right)\left( x_0 \right) -t_n\left(\frac{\vartheta_{1}}{\vartheta_{2}}\right)^{\circ}\left( x_0;v \right)}{\tfrac{1}{2}r_{n}^{-1}t_{n}^{2}}\\
     \leq & \left(\frac{\vartheta_{1}}{\vartheta_{2}}\right)^{\circ}\left( x_0;w \right) +r\left(\frac{\vartheta_{1}}{\vartheta_{2}}\right)^{\circ\circ}\left( x_0;v \right)\\
     = & \left(\frac{\vartheta_{1}}{\vartheta_{2}}\right)'\left( x_0;w \right) + r\left(\frac{\vartheta_{1}}{\vartheta_{2}}\right)^{\circ\circ}\left( x_0;v \right).
   \end{split}
\end{equation*}
Further, if $\left(\frac{\vartheta_{1}}{\vartheta_{2}}\right)''\left( x_{0};v \right)$ exists in the direction $v\in Z$, then
\begin{equation}\label{le:2.3:2}
\limsup_{n\rightarrow \infty}\frac{\left(\frac{\vartheta_{1}}{\vartheta_{2}}\right)\left( x_n \right) -\left(\frac{\vartheta_{1}}{\vartheta_{2}}\right)\left( x_0 \right) -t_n\left(\frac{\vartheta_{1}}{\vartheta_{2}}\right)^{\circ}\left( x_0;v \right)}{\tfrac{1}{2}r_{n}^{-1}t_{n}^{2}} \leq \left(\frac{\vartheta_{1}}{\vartheta_{2}}\right)'\left( x_0;w \right) +r\left(\frac{\vartheta_{1}}{\vartheta_{2}}\right)''\left( x_0;v \right).
\end{equation}
\end{lemma}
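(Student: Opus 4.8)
The plan is to abbreviate $\phi := \frac{\vartheta_1}{\vartheta_2}$ and to split the difference quotient into a piece running along the ray $x_0 + t_n v$ (to be controlled by $\phi^{\circ\circ}(x_0;v)$) and a transversal piece carrying the correction term $w_n$ (to be controlled by $\phi^{\circ}(x_0;w)$). By Proposition \ref{pro:2.1}(i) the function $\phi$ is locally Lipschitz on $U$, and by Remark \ref{regular:Lip}(ii) it is regular in the sense of Clarke at $x_0$; fix a constant $L>0$ and a neighborhood of $x_0$ on which the Lipschitz estimate holds. Write $y_n := x_0 + t_n v$ and $s_n := \tfrac{1}{2} r_n^{-1} t_n^2 > 0$, so that $x_n = y_n + s_n w_n$ and $y_n \to x_0$, and observe that since $x_n - x_0 - t_n v = s_n w_n \to 0$ while $\|w_n\|\to\|w\|$, one gets $s_n \to 0^+$ whenever $w\neq 0$. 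Using $\tfrac{1}{s_n} = \tfrac{2 r_n}{t_n^2}$, a direct rearrangement gives
\begin{align*}
\frac{\phi(x_n) - \phi(x_0) - t_n \phi^{\circ}(x_0;v)}{\tfrac{1}{2} r_n^{-1} t_n^2}
&= \underbrace{\frac{\phi(y_n + s_n w_n) - \phi(y_n)}{s_n}}_{=:A_n} \\
&\quad + \underbrace{r_n\cdot \frac{\phi(y_n) - \phi(x_0) - t_n \phi^{\circ}(x_0;v)}{\tfrac{1}{2} t_n^2}}_{=:B_n}.
\end{align*}

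For the tangential term $B_n$, Definition \ref{def:2}(ii) gives $\limsup_{n} \tfrac{\phi(y_n) - \phi(x_0) - t_n\phi^{\circ}(x_0;v)}{\frac12 t_n^2} \le \phi^{\circ\circ}(x_0;v)$; since $r_n>0$ with $r_n\to r$, for every $\varepsilon>0$ the eventual bound $B_n < r_n\bigl(\phi^{\circ\circ}(x_0;v)+\varepsilon\bigr)$ holds, and letting $\varepsilon\downarrow 0$ yields $\limsup_n B_n \le r\,\phi^{\circ\circ}(x_0;v)$. For the transversal term $A_n$, I would first replace $w_n$ by $w$: local Lipschitzness gives $\bigl| A_n - \tfrac{\phi(y_n + s_n w) - \phi(y_n)}{s_n} \bigr| \le L\|w_n - w\| \to 0$ (and when $w=0$ this estimate already forces $A_n\to 0 = \phi^{\circ}(x_0;0)$). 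For the remaining quotient, the admissible sequences $y_n\to x_0$, $s_n\to 0^+$ together with the definition of the Clarke generalized directional derivative (Definition \ref{def:3}(i)) give $\limsup_n \tfrac{\phi(y_n + s_n w) - \phi(y_n)}{s_n} \le \phi^{\circ}(x_0;w)$; hence $\limsup_n A_n \le \phi^{\circ}(x_0;w)$.

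Adding the two estimates by subadditivity of $\limsup$ proves the first inequality, and Clarke regularity of $\phi$ at $x_0$ rewrites $\phi^{\circ}(x_0;w)$ as $\phi'(x_0;w)$, which yields the asserted equality. For the final claim, if $\phi''(x_0;v)$ exists then, since regularity gives $\phi'(x_0;v)=\phi^{\circ}(x_0;v)$, Remark \ref{rem:2.2}(ii) yields $\phi''(x_0;v)=\phi^{\circ\circ}(x_0;v)$, and substituting this into the first inequality produces \eqref{le:2.3:2}. I expect the main obstacle to be the transversal term $A_n$: the base point $y_n$ moves with $n$, so one must invoke the full "$\limsup$ over $y\to x_0,\ t\to 0^+$" form of the Clarke derivative rather than a fixed-base-point limit, and the increment direction is $w_n$ rather than the target $w$, so this discrepancy must be absorbed by Lipschitz continuity before the definition applies; a secondary point is confirming $s_n\to 0^+$, which is precisely where the positivity of $r_n$ and the convergence $w_n\to w$ enter.
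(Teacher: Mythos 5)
Your proof is correct, and it follows the paper's overall strategy: the same splitting of the difference quotient into a tangential piece along $x_0+t_nv$ (estimated through Definition \ref{def:2}(ii)) and a transversal piece carrying $w_n$, with the same concluding appeals to Clarke regularity and Remark \ref{rem:2.2}(ii). Where you genuinely diverge is in the transversal term $A_n$. The paper controls $\bigl(\phi(x_n)-\phi(x_0+t_nv)\bigr)/s_n$ (with $\phi:=\tfrac{\vartheta_1}{\vartheta_2}$) via Lebourg's mean value theorem \cite{Lebourg1975}, obtaining subgradients $\xi_n\in\partial\phi(\cdot)$ at intermediate points, then uses the uniform bound of Lemma \ref{yl:7} to extract a convergent (sub)sequence $\xi_n\to\xi$, invokes closedness of the subdifferential map to place $\xi\in\partial\phi(x_0)$, and finally bounds $\langle\xi,w\rangle\le\phi^{\circ}(x_0;w)$ by \eqref{cwf:1}. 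You instead absorb the discrepancy $w_n-w$ by local Lipschitz continuity and apply the defining $\limsup$ of the Clarke generalized directional derivative along the admissible sequences $y_n\to x_0$, $s_n\to 0^+$, reaching the same bound $\phi^{\circ}(x_0;w)$ directly. Your route is more elementary and arguably more robust in the infinite-dimensional setting, since it sidesteps the extraction of a convergent sequence of subgradients (in a general reflexive Banach space this is a weak-$*$ compactness argument, and the closedness result cited from \cite{Rockafellar1998} is stated in finite dimensions); it also makes explicit the verification that $s_n\to 0^+$, together with the separate (trivial) case $w=0$, a point the mean-value-theorem route does not need but yours does. Both arguments treat the tangential term and the limiting multiplication by $r_n>0$ identically, so the two proofs agree everywhere except in this one, interchangeable, ingredient.
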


\begin{proof}
Let $x_n=x_0+t_nv + \displaystyle \tfrac{1}{2} r_{n}^{-1} t_{n}^{2}w_n$.
From Remark \ref{regular:Lip} (ii) and Proposition \ref{pro:2.1} (i), it yields that $\frac{\vartheta_{1}}{\vartheta_{2}}$ is locally Lipschitz continuous and regular in the sense of Clarke at $x_{0}\in U$.
By Lebourg's Mean-Value Theorem (see \cite[Theorem 2.3.7]{Lebourg1975}),
 we get that there exist $\theta_n\in ( 0,1 )$,
  $\xi _n\in \partial \left(\frac{\vartheta_{1}}{\vartheta_{2}}\right)\left( x_0+t_nv+\frac{\theta _n}{2}r_{n}^{-1}t_n^2w_n \right)$ such that
\begin{equation*}
\left(\frac{\vartheta_{1}}{\vartheta_{2}}\right)\left( x_n \right) -\left(\frac{\vartheta_{1}}{\vartheta_{2}}\right)\left( x_0+t_nv \right) =\tfrac{1}{2}r_{n}^{-1}t_{n}^{2} \langle \xi_{n}, w_n \rangle.
\end{equation*}
Combining Lemma \ref{yl:7} with the Lipschitz continuity of $\frac{\vartheta_{1}}{\vartheta_{2}}$,
there exists a constant $L_0>0$ such that $\| \xi_n \| \le L_0$ for $n$ sufficiently large. Without loss of generality, assume that $\xi _n\rightarrow \xi$. Owing to the closedness of $\partial \left(\frac{\vartheta_{1}}{\vartheta_{2}}\right)\left( \cdot \right)$ (see \cite[Theorem 8.6]{Rockafellar1998}), there exists $\xi \in \partial \left(\frac{\vartheta_{1}}{\vartheta_{2}}\right)\left( x_0 \right)$ such that
\begin{align*}
\lim_{n\rightarrow \infty}\frac{\left(\frac{\vartheta_{1}}{\vartheta_{2}}\right)\left( x_n \right) -\left(\frac{\vartheta_{1}}{\vartheta_{2}}\right)\left( x_0+t_nv \right)}{\tfrac{1}{2}r_{n}^{-1}t_{n}^{2}}= \langle \xi, w\rangle.
\end{align*}
In turn, due to $0<r_n\rightarrow r$, using Definition \ref{def:2} (ii)   we get
\begin{align*}
\limsup_{n\rightarrow \infty}\frac{\left(\frac{\vartheta_{1}}{\vartheta_{2}}\right)\left( x_0+t_nv \right) -\left(\frac{\vartheta_{1}}{\vartheta_{2}}\right)\left( x_0 \right) -t_n\left(\frac{\vartheta_{1}}{\vartheta_{2}}\right)^{\circ}\left( x_0 ;v \right)}{\tfrac{1}{2}r_{n}^{-1}t_{n}^{2}}\le r\left(\frac{\vartheta_{1}}{\vartheta_{2}}\right)^{\circ\circ}\left( x_0;v \right),
\end{align*}
which together with \eqref{cwf:1} yields that
\begin{align*}
~&~\limsup_{n\rightarrow \infty}\frac{\left(\frac{\vartheta_{1}}{\vartheta_{2}}\right)\left( x_n \right) -\left(\frac{\vartheta_{1}}{\vartheta_{2}}\right)\left( x_0 \right) -t_n\left(\frac{\vartheta_{1}}{\vartheta_{2}}\right)^{\circ}\left( x_0 ;v \right)}{\tfrac{1}{2}r_{n}^{-1}t_{n}^{2}}\\
\leq ~&~ \limsup_{n\rightarrow \infty}\frac{\left(\frac{\vartheta_{1}}{\vartheta_{2}}\right)\left( x_n \right) -\left(\frac{\vartheta_{1}}{\vartheta_{2}}\right)\left( x_0+t_nv \right)}{\tfrac{1}{2}r_{n}^{-1}t_{n}^{2}}\\
~&~ + \limsup_{n\rightarrow \infty}\frac{\left(\frac{\vartheta_{1}}{\vartheta_{2}}\right)\left( x_0+t_nv \right) -\left(\frac{\vartheta_{1}}{\vartheta_{2}}\right)\left( x_0 \right) -t_n\left(\frac{\vartheta_{1}}{\vartheta_{2}}\right)^{\circ}\left( x_0 ;v\right) }{\tfrac{1}{2}r_{n}^{-1}t_{n}^{2}}\\
\leq ~&~ \langle \xi, w\rangle + r\left(\frac{\vartheta_{1}}{\vartheta_{2}}\right)^{\circ\circ}(x_0;v)\\
\leq ~&~ \left(\frac{\vartheta_{1}}{\vartheta_{2}}\right)^{\circ}\left( x_0;w \right) + r\left(\frac{\vartheta_{1}}{\vartheta_{2}}\right)^{\circ\circ}(x_0;v)\\
= ~&~ \left(\frac{\vartheta_{1}}{\vartheta_{2}}\right)'\left( x_0;w \right) + r\left(\frac{\vartheta_{1}}{\vartheta_{2}}\right)^{\circ\circ}(x_0;v),
\end{align*}
which comes from the Clarke regularity of $\frac{\vartheta_{1}}{\vartheta_{2}}$ at $x_{0}\in U$. If $\left(\frac{\vartheta_{1}}{\vartheta_{2}}\right)''\left( x_{0};v \right)$ exists in the direction $v\in Z$, then
\eqref{le:2.3:2} results from
Remark \ref{rem:2.2} (ii).
The proof is completed.
\end{proof}

\begin{remark}
  It should be pointed out that Lemma \ref{yl:1} improves Lemma 2.1 of \cite{Feng2019} even when $\vartheta_{1}$ is continuously Fr\'echet differentiable at $x_{0}$, $\vartheta_{2}$ is a positive constant, and  $f^{\circ\circ}_{1}(x_{0};v)$ exists in the direction $v\in Z$. Moreover,  Lemma \ref{yl:1} is reduced to Lemma 2.1 of \cite{Feng2019} when $\vartheta_{1}$ is continuously Fr\'echet differentiable at $x_{0}$, $\vartheta_{2}$ is a positive constant, and $\vartheta''_{1}(x_{0};v)$ exists along $v\in Z$.
\end{remark}

\section{Second-order strong KKT necessary conditions for NMFP}\label{sec:3}

If the multipliers of all objective functions in the KKT conditions are positive, it is that the strong KKT conditions hold. The strong KKT condition basically implies that all objective functions are active at the point at which the necessary optimality conditions hold. This section introduces a generalized second-order Abadie regularity condition, which extends the second-order Abadie regularity conditions proposed in \cite{Feng2019,Aanchal2023}. We further study second-order strong KKT necessary conditions that contain some new information on multipliers and imply the strong KKT necessary conditions for Borwein-type properly efficient solutions of NMFP, which is stronger than (weak) Pareto efficient solution.

To begin with, we recollect the notions of Borwein properly efficient solution, projective second-order tangent cone, and projective second-order linearizing cone.

\begin{definition}\label{bpe:1}\cite{Ehrgott2005,Borwein1977}
A point $x_0\in X$ is said to be \emph{Borwein properly efficient solution} of NMFP iff
$$T\left( \left(\frac{f}{F}\right)\left( X \right) + \mathbb{R}^{p}_{+},\left(\frac{f}{F}\right)\left( x_0 \right) \right) \bigcap \left( - \mathbb{R}^{p}_{+} \right) =\left\{ 0 \right\}.$$
\end{definition}

It is pointed out in \cite{Sawaragi1985} that a Borwein properly efficient solution defined in Definition \ref{bpe:1} must also be a Pareto efficient solution.

\begin{definition}\label{def:7} \cite{Feng2019}
Let $X, Q\subseteq Z$ and $v \in Z$.
\begin{enumerate}[(i)]
  \item
   The \emph{projective second-order tangent cone} of $X$ at $x_0 \in X$ in the direction $v$ is defined by
\[
\widetilde{T}^2\left( X,x_0,v \right) =
\begin{aligned}
     &\left\{\begin{array}{ll}
             \left( w,r \right) \in Z \times \mathbb{R}_+:\,  & \exists \,t_k\rightarrow 0^+,\,\exists\, r_k\rightarrow r,\,\exists\, w_k\rightarrow w \,\text{ such that } \\                &\frac{t_k}{r_k}\rightarrow 0^+,\,\, x_0+t_kv+\tfrac{1}{2}t_{k}^{2}w_k\in X,\,\forall\, k\in \mathbb{N}
            \end{array}\right\}.
\end{aligned}
\]

\item
The \emph{projective second-order linearizing cone} of $Q$ at $x_0 \in X$ in the direction $v$ is defined by
\begin{equation*}
\resizebox{0.9\textwidth}{!}{
$
\widetilde{C}^2\left( Q,x_0,v \right) =
\begin{aligned}
\left\{\begin{array}{lll}
(w,r)\in Z \times \mathbb{R}_+:
& \left( \frac{f}{F} \right)^\circ _i \left( x_0;w \right)+r\left( \frac{f}{F} \right)^{\circ \circ}_i\left( x_0;v \right) \leq 0,  & \, i\in I, \\
& g_j^\circ ( x_0;w )+r g_{j}^{\circ \circ}( x_0;v ) \leq 0, & \, j\in J( x_0,v ), \\
& h_k^\circ ( x_0;w )+rh_k^{\circ \circ}( x_0;v ) = 0, &\,  k\in K
\end{array}\right\}.
\end{aligned}
$}
\end{equation*}

\end{enumerate}
\end{definition}

The  projective second-order tangent cone $\widetilde{T}^2\left( X,x_0,v \right)$ has been widely applied  to study optimality conditions; see \cite{Feng2019,Aanchal2023,Penot1999,Jimenez2004,Feng2018}.
 Moreover, it follows from \cite[Proposition 2.1]{Feng2018} that $\widetilde{T}^2\left(X,x_0,0 \right)=T(X,x_0)\times \mathbb{R}_+$, and $v\notin T(X,x_0)$ implies $\widetilde{T}^2\left( X,x_0,0 \right)= \emptyset$.

Next, we introduce a new second-order Abadie regularity condition and a new second-order Guignard-type regular condition for NMFP.
\begin{definition}
We say that
 \begin{enumerate}[(i)]
  \item
    a \emph{generalized second-order Abadie-type regularity condition} (GSOARC) holds at $x_0\in X$ in the direction $v\in D(x_0)$ iff
\begin{equation}\label{abd:1}
\widetilde{C}^2 ( Q, x_0, v)\subseteq \widetilde{T}^2 ( X,x_0,v ).
\end{equation}

\item
  a \emph{generalized second-order Guignard-type regular condition} (GSOGRC) holds at $x_0\in X$ in the direction $v\in D(x_0)$ iff
\begin{equation}\label{gui:1}
\widetilde{C}^2 ( Q, x_0, v)\subseteq \text{clco}\,\widetilde{T}^2( X, x_0, v).
\end{equation}
\end{enumerate}
\end{definition}

\begin{remark}\label{rem:1}
\begin{enumerate}[(i)]
\item  It is easy to see that GSOGRC is a weaker regular condition than GSOARC.
    If $v=0$, specifically, the P\'ales-Zeidan second-order generalized directional derivative becomes $0$, then GSOARC is reduced to a degenerate form
\begin{equation}\label{abd:3}
C\left( Q,x_0 \right) \subseteq T\left( X,x_0 \right),
\end{equation}
as $\widetilde{C}^2\left( Q,x_0,0 \right)=C\left( Q,x_0 \right)\times \mathbb{R}_+$ and $\widetilde{T}^2\left( X,x_0,0 \right)=T\left( X,x_0 \right)\times \mathbb{R}_+$.

  \item
    If all the functions involved have continuous Fr\'echet derivatives and the corresponding second-order directional derivatives in the direction $v$ exist, then the GSOARC is reduced to the \emph{second-order Abadie regularity condition} SOARC in \cite{Feng2019}.

   \item  In \cite{Feng2019}, SOARC was proposed by utilizing Fr\'echet derivatives and second-order directional derivatives. Another second-order Abadie constraint qualification in \cite{Aanchal2023} was introduced in terms of Clarke generalized directional derivatives and P\'ales-Zeidan second-order generalized directional derivatives, which did not involve the objective functions. Compared with the second-order Abadie regularity conditions in \cite{Feng2019,Aanchal2023}, GSOARC is introduced in terms of Clarke generalized directional derivatives and P\'ales-Zeidan second-order generalized directional derivatives, which also incorporate objective functions.
\end{enumerate}
\end{remark}

\begin{framed}
\begin{assumption}\label{js:1}
\begin{enumerate}
\item[{\rm (i)}] For $i\in I,\, j\in J,\, k\in K$, the functions $f_i$, $F_i$, $g_j$ and $h_k$ are locally Lipschitz continuous and regular in the sense of Clarke on $U$, and $f_i^{\circ\circ}(x;v)$, $g_j^{\circ\circ}(x;v)$ and $h_k^{\circ\circ}(x;v)$ are finite on $U$ for all direction $v\in Z$.

\item[{\rm (ii)}] For $i\in I$, $F_i$'s are G\^ateaux differentiable on $U$ with G\^ateaux derivative $F'_{i,G}$, and $F''_i(x;v)$ exist on $U$ for each direction $v\in Z$.
\end{enumerate}
\end{assumption}
\end{framed}

\begin{lemma}\label{yl:3}
Let $x_0 \in X$ be a Borwein properly efficient solution of {\rm NMFP}, $v\in D(x_0)$ and $s=(s_1,s_2,\ldots,s_p)$ be defined as that in Lemma \ref{lem:2.1}. Let Assumption ~\ref{js:1} be fulfilled. Then, the following system
\begin{equation}\label{xt:1}
\left\{ \begin{array}{l}
	f_{i}^{\circ}\left( x_0;w \right) +rf_{i}^{\circ\circ}\left( x_0;v \right)-s_i(\langle F'_{i,G}\left( x_0 \right), w \rangle + rF''_{i}\left( x_0;v \right)) \leq 0,\,\forall\, i\in I,\\
	f_{i}^{\circ}\left( x_0;w \right) + r f_{i}^{\circ\circ}\left( x_0; v \right) - s_i(\langle F'_{i,G}\left( x_0 \right), w \rangle + r F''_{i}\left( x_0;v \right)) <0,\, \exists\, i\in I,\\
	\left( w,r \right) \in \widetilde{T}^2\left( X,x_0,v \right),
\end{array} \right.
\end{equation}
has no solution $(w,r) \in Z \times \mathbb{R}_+$.
\end{lemma}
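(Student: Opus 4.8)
The plan is to argue by contradiction and, from a solution $(w,r)$ of \eqref{xt:1}, manufacture a nonzero vector in $T\!\left(\left(\tfrac{f}{F}\right)(X)+\mathbb{R}^p_+,\left(\tfrac{f}{F}\right)(x_0)\right)\cap(-\mathbb{R}^p_+)$, which contradicts the Borwein proper efficiency of $x_0$. First I would recast \eqref{xt:1} in fractional form. Since $s_i=\tfrac{f_i(x_0)}{F_i(x_0)}$ and $F_i(x_0)>0$, Proposition \ref{pro:2.1}(ii) and (iv) give
\[
F_i(x_0)\left[\left(\tfrac{f_i}{F_i}\right)^{\circ}(x_0;w)+r\left(\tfrac{f_i}{F_i}\right)^{\circ\circ}(x_0;v)\right]
= f_i^{\circ}(x_0;w)+r f_i^{\circ\circ}(x_0;v)-s_i\!\left(\langle F'_{i,G}(x_0),w\rangle+r F''_i(x_0;v)\right)
\]
for each $i\in I$, so the positivity of $F_i(x_0)$ shows \eqref{xt:1} is equivalent to the existence of $(w,r)\in\widetilde{T}^2(X,x_0,v)$ with $\left(\tfrac{f_i}{F_i}\right)^{\circ}(x_0;w)+r\left(\tfrac{f_i}{F_i}\right)^{\circ\circ}(x_0;v)\le 0$ for all $i\in I$, strict for some $i^*$. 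By Assumption \ref{js:1} and Remark \ref{regular:Lip}(ii), each $\tfrac{f_i}{F_i}$ is locally Lipschitz and Clarke regular at $x_0$ with finite $\left(\tfrac{f_i}{F_i}\right)^{\circ\circ}(x_0;v)$, so Lemma \ref{yl:1} applies to each of them.

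Next I would unpack $(w,r)\in\widetilde{T}^2(X,x_0,v)$ to obtain $t_n\to0^+$, $r_n\to r$ with $r_n>0$, $w_n\to w$ and $t_n/r_n\to0^+$, together with feasible points $x_n\in X$, $x_n\to x_0$, satisfying $\tfrac{x_n-x_0-t_nv}{\frac12 r_n^{-1}t_n^2}\to w$, which is exactly the setting of Lemma \ref{yl:1}. Because $v\in D(x_0)\subseteq C(Q,x_0)$, the quantities $A_i:=\left(\tfrac{f_i}{F_i}\right)^{\circ}(x_0;v)$ satisfy $A_i\le0$ for all $i$. If $A_{i_0}<0$ for some $i_0$, I use the first-order scale $\tau_n=t_n$: writing $x_n=x_0+t_n\tilde v_n$ with $\tilde v_n\to v$ (which holds since $\tfrac12 r_n^{-1}t_n\to0$) and invoking the local Lipschitz continuity and Clarke regularity of $\tfrac{f_i}{F_i}$, I obtain $\tfrac{(f_i/F_i)(x_n)-(f_i/F_i)(x_0)}{t_n}\to A_i$ for every $i$. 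Then $d:=(A_1,\dots,A_p)^{\top}$ lies in the contingent cone and in $-\mathbb{R}^p_+\setminus\{0\}$, the desired contradiction.

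In the remaining case $A_i=0$ for all $i$, I would switch to the second-order scale $\tau_n=\frac12 r_n^{-1}t_n^2\to0^+$ and apply Lemma \ref{yl:1} componentwise to get $\limsup_n\tfrac{(f_i/F_i)(x_n)-(f_i/F_i)(x_0)}{\tau_n}\le\left(\tfrac{f_i}{F_i}\right)^{\circ}(x_0;w)+r\left(\tfrac{f_i}{F_i}\right)^{\circ\circ}(x_0;v)=:C_i\le0$, with $C_{i^*}<0$. After passing to a subsequence along which each quotient $d_n^{(i)}:=\tfrac{(f_i/F_i)(x_n)-(f_i/F_i)(x_0)}{\tau_n}$ converges in $[-\infty,C_i]$, I build the tangent vector by keeping $\left(\tfrac{f}{F}\right)(x_n)$ on coordinates whose quotient stays finite and, on coordinates whose quotient tends to $-\infty$, adding the nonnegative correction $c_n^{(i)}:=-\tau_n(1+d_n^{(i)})\ge0$ (legitimate since the corrected point remains in $\left(\tfrac{f}{F}\right)(X)+\mathbb{R}^p_+$ and the corresponding quotient tends to $-1$). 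The limit $d$ of the corrected difference quotients then lies in $-\mathbb{R}^p_+$ and has strictly negative $i^*$-th component, hence $d\in T\!\left(\left(\tfrac{f}{F}\right)(X)+\mathbb{R}^p_+,\left(\tfrac{f}{F}\right)(x_0)\right)\cap(-\mathbb{R}^p_+\setminus\{0\})$, again contradicting Borwein proper efficiency.

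The main obstacle is this second, purely second-order, case. Since Lemma \ref{yl:1} yields only a $\limsup$ estimate, the scaled differences need not converge and some coordinates may diverge to $-\infty$, so no exact dominance $\left(\tfrac{f}{F}\right)(x_n)\le\left(\tfrac{f}{F}\right)(x_0)$ can be expected at any finite $n$; this is exactly why Borwein proper efficiency, through the contingent cone of $\left(\tfrac{f}{F}\right)(X)+\mathbb{R}^p_+$, must be used instead of plain Pareto efficiency, and why the $\mathbb{R}^p_+$-correction step is needed to recover a bona fide nonzero tangent direction. A secondary technical point is to align the $(w,r)$-parametrization of $\widetilde{T}^2(X,x_0,v)$ with that of Lemma \ref{yl:1}, so that the multiplier $r$ attaches to the second-order term precisely as in the definition of $\widetilde{C}^2(Q,x_0,v)$.
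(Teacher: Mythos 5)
Your argument is correct and follows the paper's own route: the paper likewise first converts \eqref{xt:1} into the equivalent fractional system via Proposition \ref{pro:2.1}(ii) and (iv), and then invokes Lemma \ref{yl:1} together with the contingent-cone argument of \cite[Lemma 4.1]{Feng2019}, which is precisely the contradiction you spell out — sequences drawn from $\widetilde{T}^2(X,x_0,v)$, the $\limsup$ bounds of Lemma \ref{yl:1}, and an $\mathbb{R}^p_+$-correction yielding a nonzero element of $T\bigl((\tfrac{f}{F})(X)+\mathbb{R}^p_+,(\tfrac{f}{F})(x_0)\bigr)\cap(-\mathbb{R}^p_+)$. The paper delegates that second stage to the citation, so your write-up simply supplies the details it omits.
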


\begin{proof}
From Proposition \ref{pro:2.1} (ii) and (iv), it follows that
\begin{align*}
~&~\left( \frac{f_i}{F_i} \right)^\circ \left( x_0;w \right)+r\left( \frac{f_i}{F_i} \right)^{\circ\circ} \left( x_0;v \right)\\
=~&~\frac{1}{F_i(x_0)}\left[f_i^\circ(x_0;w)-s_i \langle F'_{i,G}(x_0), w\rangle \right] + \frac{r}{F_i(x_0)}\left[f_i^{\circ\circ}(x_0;v)-s_iF''_i\left( x_0;v \right)\right]\\
=~&~\frac{1}{F_i(x_0)}\left[f_i^\circ(x_0;w)+rf_i^{\circ\circ}(x_0;v)-s_i(\langle F'_{i,G}\left( x_0 \right), w \rangle +rF''_{i}\left( x_0;v \right))\right].
\end{align*}
Due to $F_i(x_0) > 0$ for all $i\in I$, it ensures that
\begin{align*}
 ~&~ \left( \frac{f_i}{F_i} \right)^\circ \left( x_0;w \right)+r\left( \frac{f_i}{F_i} \right)^{\circ\circ} \left( x_0;v \right)<(\le) 0 \\
\Longleftrightarrow ~&~
f_i^\circ(x_0;w)+rf_i^{\circ\circ}(x_0;v)-s_i(F'_{i,G}\left( x_0 \right) w+rF''_{i}\left( x_0;v \right))<(\le)0.
\end{align*}
Therefore, the system \eqref{xt:1} is equivalent to
\begin{equation}\label{xt:1.1}
\left\{ \begin{array}{l}
	\left( \frac{f_i}{F_i} \right)^\circ \left( x_0;w \right)+r\left( \frac{f_i}{F_i} \right)^{\circ\circ} \left( x_0;v \right) \leq 0,\,\forall\, i\in I,\\
	\left( \frac{f_i}{F_i} \right)^\circ \left( x_0;w \right)+r\left( \frac{f_i}{F_i} \right)^{\circ\circ} \left( x_0;v \right) <0,\, \exists\, i\in I,\\
	\left( w,r \right) \in \widetilde{T}^2\left( X,x_0,v \right).
\end{array} \right.
\end{equation}
Consequently, invoking Lemma \ref{yl:1} by the similar proof presented in \cite[Lemma 4.1]{Feng2019}, one can obtain that the system \eqref{xt:1.1} has no solution $(w,r)\in Z \times \mathbb{R}_+$. This completes the proof.
\end{proof}

We now derive the second-order strong KKT necessary conditions for a Borwein-type proper efficient solution of NMFP with the help of GSOARC.

\begin{theorem}\label{mt:1}{\rm [Primal condition]}
Let $x_0 \in U$ be a Borwein-properly efficient solution of {\rm NMFP} and $s=(s_1,s_2,\ldots,s_p)$ be defined as that in Lemma \ref{lem:2.1}. Let Assumption ~\ref{js:1} be fulfilled. If {\rm GSOARC} holds at $x_0$ in the direction $v\in D(x_0)$, then for any $r\ge0$, the  system
\begin{equation}\label{xt:2}
\left\{ \begin{array}{l}
f_{i}^{\circ}\left( x_0;w \right) +rf_{i}^{\circ\circ}\left( x_0;v \right)-s_i(\langle F'_{i,G}\left( x_0 \right), w \rangle + rF''_{i}( x_0;v )) \leq 0,\,\forall\, i\in I,\\
f_{i}^{\circ}\left( x_0;w \right) +rf_{i}^{\circ\circ}\left( x_0;v \right)-s_i(\langle F'_{i,G}\left( x_0 \right), w\rangle + r F''_{i}( x_0;v)) <0,\,\exists \, i\in I,\\
g_{j}^{\circ}\left( x_0 ;w\right) +rg_{j}^{\circ\circ}\left( x_0;v \right) \leq 0,\,\forall\, j\in J( x_0,v),\\
h_{k}^{\circ}\left( x_0 ;w\right) +rh_{k}^{\circ\circ}\left( x_0;v \right)=0,\,\forall\, k\in K,\\
\end{array} \right.
\end{equation}
is incompatible in $w \in Z$.
\end{theorem}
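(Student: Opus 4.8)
The plan is to argue by contradiction, reducing the incompatibility of \eqref{xt:2} to the unsolvability of system \eqref{xt:1} already established in Lemma \ref{yl:3}. Suppose, to the contrary, that for some fixed $r\ge 0$ there exists $w\in Z$ satisfying all four conditions of \eqref{xt:2}. I will show that the pair $(w,r)$ would then solve \eqref{xt:1}, which is impossible.

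First I would translate the objective-type inequalities into quotient form. By the identity of Proposition \ref{pro:2.1}(ii) and (iv), together with $F_i(x_0)>0$ and $s_i=\left(\frac{f_i}{F_i}\right)(x_0)$ — this is exactly the computation at the start of the proof of Lemma \ref{yl:3} — the first line of \eqref{xt:2} (the inequality ``$\le 0$'' holding for \emph{every} $i\in I$) is equivalent to
\[
\left(\frac{f_i}{F_i}\right)^\circ(x_0;w)+r\left(\frac{f_i}{F_i}\right)^{\circ\circ}(x_0;v)\le 0,\quad\forall\, i\in I.
\]
Combining these with the third and fourth lines of \eqref{xt:2}, namely $g_j^\circ(x_0;w)+rg_j^{\circ\circ}(x_0;v)\le 0$ for $j\in J(x_0,v)$ and $h_k^\circ(x_0;w)+rh_k^{\circ\circ}(x_0;v)=0$ for $k\in K$, I recognize precisely the defining constraints of the projective second-order linearizing cone, so that $(w,r)\in\widetilde{C}^2(Q,x_0,v)$.

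Next I would invoke the hypothesis that GSOARC holds at $x_0$ in the direction $v$, that is, the inclusion \eqref{abd:1}, giving $\widetilde{C}^2(Q,x_0,v)\subseteq\widetilde{T}^2(X,x_0,v)$. Hence $(w,r)\in\widetilde{T}^2(X,x_0,v)$, which is exactly the third condition of \eqref{xt:1}. Since the first two lines of \eqref{xt:2} coincide verbatim with the first two lines of \eqref{xt:1}, the pair $(w,r)$ now satisfies every condition of \eqref{xt:1}. This contradicts Lemma \ref{yl:3}, which asserts that \eqref{xt:1} has no solution $(w,r)\in Z\times\mathbb{R}_+$ under Assumption \ref{js:1} for a Borwein properly efficient $x_0$ and $v\in D(x_0)$. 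Therefore \eqref{xt:2} is incompatible in $w$.

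The argument is a chaining of three facts — the quotient equivalence, the GSOARC inclusion, and Lemma \ref{yl:3} — so no single step is technically deep. The only point requiring care is the bookkeeping of the constraint sets: one must feed only the ``$\le 0$ for all $i$'' half of the objective inequalities into the membership $(w,r)\in\widetilde{C}^2(Q,x_0,v)$, while the strict-for-some-$i$ inequality is kept aside to match the second line of \eqref{xt:1}. The genuine analytic content — the passage from the infinitesimal system to the second-order tangent geometry via Lebourg's mean-value theorem and the \emph{limsup} estimate — has already been absorbed into Lemma \ref{yl:1} and Lemma \ref{yl:3}, so here it suffices to quote them.
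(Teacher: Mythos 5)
Your proof is correct and takes essentially the same route as the paper: use the quotient identity from Proposition \ref{pro:2.1} to recognize that a putative solution $(w,r)$ of \eqref{xt:2} lies in $\widetilde{C}^2(Q,x_0,v)$, push it into $\widetilde{T}^2(X,x_0,v)$ via GSOARC, and contradict Lemma \ref{yl:3}. Your write-up actually makes explicit the bookkeeping (which inequalities feed the cone membership versus which are held back to match \eqref{xt:1}) that the paper's two-sentence proof leaves implicit.
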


\begin{proof}
From Lemma \ref{yl:3}, it follows that there is no $(w,r)\in Z \times \mathbb{R}_+$ such that the system \eqref{xt:1} is consistent. Since GSOARC holds at $x_0$ in the direction $v$, combining \eqref{abd:1} with Definition \ref{def:7}, for all $r\ge0$, the system \eqref{xt:2} does not have a solution $w \in Z$. The proof is completed.
\end{proof}

\begin{theorem}\label{dl:1}{\rm [Dual condition]}
Let $x_0 \in U$ be a Borwein properly efficient solution of {\rm NMFP} and $s=(s_1,s_2,\ldots,s_p)$ be defined as stated in Lemma \ref{lem:2.1}.
Let Assumption ~\ref{js:1} be fulfilled and functions $f_i \,(i\in I)$, $g_j \,(j\in J)$ and $h_k \,(k\in K)$ be G\^ateaux differentiable at $x_0$.
Assume that {\rm GSOARC} holds at $x_0$ in the direction $v\in D(x_0)$. Then, there exist $\lambda \in \mathbb{R}^p_{++}$,
 $\mu \in \mathbb{R}^m_{+}$ and $\nu \in \mathbb{R}^l$ such that
\begin{align}
& \sum_{i=1}^p\lambda _i(f'_{i,G}(x_0)-s_iF'_{i,G}(x_0)) +\sum_{j=1}^m\mu _jg'_{j,G}(x_0)+\sum_{k=1}^l\nu _k h'_{j,G}(x_0) =0, \label{kkt:1} \\
& \sum_{i=1}^p\lambda _i(f_{i}^{\circ\circ}( x_0;v )-s_iF''_{i}( x_0;v )) +\sum_{j=1}^m\mu _jg_{j}^{\circ\circ}( x_0;v )+\sum_{k=1}^l\nu _kh_k^{\circ\circ}( x_0;v ) \geq 0, \label{kkt:2} \\
& \mu_jg_j(x_0)=0,\,\, j=1,2, \ldots, m \label{ys:1} \\
\text{ and }~ & \mu_j g'_{j,G}(x_0) v = 0,\,\, j=1,2, \ldots, m. \label{ys:2}
\end{align}
\end{theorem}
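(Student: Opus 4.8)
The plan is to dualize the primal incompatibility of Theorem \ref{mt:1} by a Tucker-type theorem of the alternative, using $(w,r)$ as a single homogenization variable in $Z\times\mathbb{R}$. First I would linearize all first-order terms. Since each $f_i$, $g_j$, $h_k$ is regular in the sense of Clarke at $x_0$ (Assumption \ref{js:1}(i)) and, by the extra hypothesis of the theorem, is G\^ateaux differentiable at $x_0$, Remark \ref{regular:Lip}(i) gives $f_i^{\circ}(x_0;w)=\langle f'_{i,G}(x_0),w\rangle$, $g_j^{\circ}(x_0;w)=\langle g'_{j,G}(x_0),w\rangle$ and $h_k^{\circ}(x_0;w)=\langle h'_{k,G}(x_0),w\rangle$. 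Consequently every row of the system \eqref{xt:2}, read as a function of the pair $(w,r)\in Z\times\mathbb{R}$, is a continuous linear functional: the objective rows are $\ell_i(w,r):=\langle f'_{i,G}(x_0)-s_iF'_{i,G}(x_0),w\rangle+r\bigl(f_i^{\circ\circ}(x_0;v)-s_iF''_i(x_0;v)\bigr)$, the active inequality rows are $\langle g'_{j,G}(x_0),w\rangle+r\,g_j^{\circ\circ}(x_0;v)$ for $j\in J(x_0,v)$, and the equality rows are $\langle h'_{k,G}(x_0),w\rangle+r\,h_k^{\circ\circ}(x_0;v)$ for $k\in K$.

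Second, I would record that by Theorem \ref{mt:1} the system \eqref{xt:2} has no solution $w$ for any $r\ge 0$, which is exactly the statement that there is no $(w,r)\in Z\times\mathbb{R}$ with $r\ge 0$ solving ``$\ell_i\le 0$ for all $i$ and $\ell_i<0$ for some $i$; the active-inequality rows $\le 0$; the equality rows $=0$''. Appending the extra linear inequality $r\ge 0$ to the inequality block, this is precisely the infeasibility of the primal alternative $Bx\gneqq 0$, $Cx\geqq 0$, $Dx=0$ with $x=(w,r)$, where $B$ collects the negated objective functionals, $C$ collects the negated active-inequality functionals together with the coordinate functional $(w,r)\mapsto r$, and $D$ collects the equality functionals. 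Since only finitely many functionals occur, the alternative is decided on their common finite-dimensional range, so the finite-dimensional \emph{Tucker theorem of the alternative} applies verbatim: its dual alternative furnishes $\lambda\in\mathbb{R}^p$ with $\lambda>0$ (componentwise strictly positive), $\mu_j\ge 0$ for $j\in J(x_0,v)$, a multiplier $\eta\ge 0$ for the row $r\ge 0$, and $\nu_k\in\mathbb{R}$, such that $\sum_i\lambda_i(-\ell_i)-\sum_j\mu_j(\text{ineq.\ row})+\eta\,r+\sum_k\nu_k(\text{eq.\ row})\equiv 0$ as a functional on $Z\times\mathbb{R}$.

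Third, I would split this functional identity along the direct sum $Z\times\mathbb{R}$. Evaluating on $(w,0)$ isolates its $Z^*$-component and, after the harmless sign change $\nu_k\mapsto-\nu_k$ (legitimate because $\nu$ is free), yields exactly \eqref{kkt:1}. Evaluating on $(0,r)$ isolates the second-order constants and, under the same sign change, yields $\sum_i\lambda_i\bigl(f_i^{\circ\circ}(x_0;v)-s_iF''_i(x_0;v)\bigr)+\sum_j\mu_j g_j^{\circ\circ}(x_0;v)+\sum_k\nu_k h_k^{\circ\circ}(x_0;v)=\eta\ge 0$, which is \eqref{kkt:2}; note that the inequality there is realized precisely by the nonnegative slack $\eta$ of the constraint $r\ge 0$. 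Finally, extending the multipliers by $\mu_j:=0$ for $j\in J\setminus J(x_0,v)$, the complementary slackness \eqref{ys:1} holds because $j\in J(x_0,v)\subseteq J(x_0)$ forces $g_j(x_0)=0$ while $\mu_j=0$ otherwise, and \eqref{ys:2} holds because $j\in J(x_0,v)$ forces $\langle g'_{j,G}(x_0),v\rangle=g_j^{\circ}(x_0;v)=0$ by the definition of $J(x_0,v)$ and Clarke regularity, while again $\mu_j=0$ off $J(x_0,v)$.

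I expect the principal difficulty to lie in the second step, namely in producing strictly positive objective multipliers $\lambda\in\mathbb{R}^p_{++}$ rather than merely a nonzero nonnegative $\lambda$. A plain Gordan or Motzkin alternative would only deliver $\lambda\gneqq 0$; it is the ``at least one strict inequality'' form of the objective block in \eqref{xt:2}, inherited from the Borwein proper efficiency through Lemma \ref{yl:3}, that lets Tucker's theorem force every $\lambda_i$ to be positive and thereby gives the \emph{strong} KKT conclusion. A secondary point requiring care is that the homogenization variable $r$ must be retained together with its sign constraint $r\ge 0$, since discarding it would destroy the infeasibility, and its nonnegative multiplier $\eta$ is exactly what produces the inequality direction in \eqref{kkt:2}.
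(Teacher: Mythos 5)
Your proposal is correct and follows essentially the same route as the paper: both start from the primal incompatibility of Theorem \ref{mt:1} augmented with the row $r\ge 0$, recast the system as a linear alternative in $(w,r)\in Z\times\mathbb{R}$ via G\^ateaux differentiability and Clarke regularity, and invoke a Tucker-type theorem of the alternative (the paper cites \cite[Theorem 3.24]{Ehrgott2005}) to obtain strictly positive objective multipliers, with the slack of the $r\ge 0$ row producing the inequality in \eqref{kkt:2} and the extension $\mu_j:=0$ off $J(x_0,v)$ yielding \eqref{ys:1} and \eqref{ys:2}. Your explicit justification of \eqref{ys:2} via the definition of $J(x_0,v)$ is a detail the paper leaves implicit, but it is the same argument.
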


\begin{proof}
From Theorem \ref{mt:1}, it follows that there is no $(w,r) \in Z \times \mathbb{R}$ such that
\begin{equation}\label{xt:3}
\left. \begin{array}{l}
\langle f'_{i,G}( x_0 )-s_iF'_{i,G}( x_0 ), w \rangle + r(f_{i}^{\circ\circ}( x_0,v)-s_iF''_{i}( x_0,v)) \leq 0,\,\forall i\in I,\\
\langle f'_{i,G}( x_0 )-s_iF'_{i,G}( x_0 ), w\rangle + r(f_{i}^{\circ\circ}( x_0;v )-s_iF''_{i}( x_0;v )) <0,\,\exists i\in I,\\
\langle g'_{j,G}( x_0 ), w \rangle + r g_{j}^{\circ\circ}( x_0;v ) \leq 0,\,\forall j\in J( x_0,v),\\
\langle h'_{k,G}( x_0 ), w \rangle + r h_{k}^{\circ\circ}( x_0;v )=0,\,\forall k\in K,\\
-r\le 0.
\end{array} \right\}
\end{equation}
Let $J( x_0,v)=\{j_1,j_2,\ldots,j_t\}$. We consider the following matrices:
$$B:=\left( \begin{matrix}
f'_{1,G}( x_0 )-s_1F'_{1,G}( x_0 )	~~~~&~~~~\quad f_{1}^{\circ\circ}( x_0;v )-s_1F''_{1}( x_0;v )\\
f'_{2,G}( x_0 )-s_2F'_{2,G}( x_0 )	~~~~&~~~~\quad f_{2}^{\circ\circ}( x_0;v )-s_2F''_{2}( x_0;v )\\
	\vdots	& \vdots	\\
f'_{p,G}( x_0 )-s_pF'_{p,G}( x_0 )	~~~~&~~~~\quad f_{p}^{\circ\circ}( x_0;v )-s_pF''_{p}( x_0;v )\\
\end{matrix} \right),$$
$$C:=\left( \begin{matrix}
g'_{j_1,G}( x_0 )	\quad & \quad g_{j_1}^{\circ\circ}( x_0;v )\\
g'_{j_2,G}( x_0 )	\quad & \quad g_{j_2}^{\circ\circ}( x_0;v )\\
	\vdots	\quad & \quad \vdots	\\
g'_{j_t,G}( x_0 )	\quad & \quad g_{j_t}^{\circ\circ}( x_0;v )\\
0 & -1
\end{matrix} \right) ~~\text{ and }~~
D:=\left( \begin{matrix}
h'_{1,G}( x_0 )	\quad & \quad h_{1}^{\circ\circ}( x_0;v )\\
h'_{2,G}( x_0 )	\quad & \quad h_{2}^{\circ\circ}( x_0;v )\\
	\vdots	\quad & \quad \vdots	\\
h'_{l,G}( x_0 )	\quad & \quad h_{l}^{\circ\circ}( x_0;v )\\
\end{matrix} \right).$$
Then, note that the system \eqref{xt:3} is inconsistent in $(w,r)\in Z \times \mathbb{R}$ if and only if
$$\langle B, (w,r)\rangle \le 0,\,\, \langle C, (w,r)\rangle \leqq0,\,\, \langle D, (w,r)\rangle = 0$$
has no solution $(w,r) \in Z \times \mathbb{R}$.
This together with \cite[Theorem 3.24]{Ehrgott2005} yield that there exist $\lambda \in \mathbb{R}^p_{++}$, $\mu_j\ge0, j\in J(x_0,v)$, $\nu \in \mathbb{R}^l$ and $\mu_0\ge0$ such that
\begin{align*}
&  \sum_{i=1}^p{\lambda _i(f'_{i,G}( x_0 )-s_iF'_{i,G}( x_0 )) +\sum_{j=1}^m{\mu _jg'_{j,G}( x_0 )}}+\sum_{k=1}^l{\nu _k h'_{j,G}( x_0 ) =0}, \\
\text{ and }~ &  \sum_{i=1}^p\lambda _i(f_{i}^{\circ\circ}( x_0;v )-s_iF''_{i}( x_0;v ))+\sum_{j=1}^m\mu _jg_{j}^{\circ\circ}( x_0;v ) +\sum_{k=1}^l\nu _kh_k^{\circ\circ}( x_0;v ) =\mu_0\ge0.
\end{align*}
Consequently, we get \eqref{kkt:1}--\eqref{ys:2} by picking up $\mu_j=0$ when $j\notin J(x_0,v)$.  The proof is completed.
\end{proof}

\begin{remark}
Theorem \ref{dl:1} presents a second-order strong KKT necessary conditions, which contains some new information on multipliers \eqref{ys:2} at a Borwein properly efficient solution of the problem \eqref{mfp:1}. From the proof of Theorem \ref{dl:1}, we get a first-order strong KKT condition \eqref{kkt:1} at a Borwein properly efficient solution. As indicated in Remark \ref{rem:1}, when $v=0$, GSOARC reduces to \eqref{abd:3}. In this case, while \eqref{kkt:2} is trivial,  \eqref{kkt:1} and \eqref{ys:1} remain satisfied.
\end{remark}

\begin{remark}
\begin{enumerate}
\item[{\rm (i)}] If for each $i \in I, j\in J, k\in K$, $f_i$ and $g_{j}$ are continuously Fr\'echet differentiable at $x_0 \in U$ and $F_i(x) = 1$ and $h_{k}(x)\equiv 0$  for all $x \in U$,
and $f''_i(x_0, v)$ and $g''_j(x_0, v)$ exist for all $i \in I, j\in J(x_0, v)$, then
 Theorem 4.1 and  Theorem 4.2 in \cite{Feng2019} can be deduced by Theorem \ref{mt:1} and Theorem \ref{dl:1}, respectively.

\item[{\rm (ii)}] If for each $i \in I, j\in J, k\in K$, $f_i, g_{j}$ and $h_{k}$ are continuously Fr\'echet differentiable at $x_0 \in U$ and $F_i(x) = 1$  for all $x \in U$,
and $f''_i(x_0, v), h''_k(x_0, v)$ and $g''_j(x_0, v)$ exist for all $i \in I, j\in J(x_0, v)$, then  Theorem 4.1 and  Theorem 4.2 in \cite{Feng2018} can be recovered by Theorem \ref{mt:1} and Theorem \ref{dl:1}, respectively.
\end{enumerate}
\end{remark}

\begin{corollary}
Let $x_0$ be a Borwein properly efficient solution of {\rm NMFP } and $s$ $=$ $(s_1$, $s_2$, $\ldots$, $s_p)$ be defined as that in Lemma \ref{lem:2.1}. Let Assumption \ref{js:1} be fulfilled and functions $f_i \,(i\in I)$, $g_j \,(j\in J)$ and $h_k\,(k\in K)$ be G\^ateaux differentiable at $x_0$. If \eqref{abd:3} holds at $x_0$, then there exist vectors $\lambda \in \mathbb{R}^p_{++}$, $\mu \in \mathbb{R}^m_{+}$ and $\nu \in \mathbb{R}^l$ such that \eqref{kkt:1} and \eqref{ys:1} hold.
\end{corollary}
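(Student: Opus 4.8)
The plan is to obtain this first-order strong KKT condition as the degenerate ($v=0$) instance of the dual condition in Theorem~\ref{dl:1}. First I would check that the zero direction is admissible, i.e., that $0\in D(x_0)=T(X,x_0)\cap C(Q,x_0)$. Since $T(X,x_0)$ is a cone containing the origin, and at $w=0$ every defining inequality of the linearizing cone $C(Q,x_0)$ reduces to a trivial relation --- because $\vartheta^{\circ}(x_0;0)=0$ for any locally Lipschitz $\vartheta$, directly from Definition~\ref{def:3}(i) --- we also have $0\in C(Q,x_0)$. Hence $0\in D(x_0)$, so the direction $v=0$ is a legitimate choice in Theorem~\ref{dl:1}.

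Next I would argue that the hypothesis \eqref{abd:3} is precisely GSOARC evaluated at $v=0$. This is exactly the content of Remark~\ref{rem:1}(i): with $v=0$ one has $\widetilde{C}^2(Q,x_0,0)=C(Q,x_0)\times\mathbb{R}_+$ and $\widetilde{T}^2(X,x_0,0)=T(X,x_0)\times\mathbb{R}_+$, so the inclusion $\widetilde{C}^2(Q,x_0,0)\subseteq\widetilde{T}^2(X,x_0,0)$ holds if and only if $C(Q,x_0)\subseteq T(X,x_0)$, which is \eqref{abd:3}. Thus the standing assumptions --- $x_0$ a Borwein properly efficient solution, Assumption~\ref{js:1}, G\^ateaux differentiability of $f_i$, $g_j$, $h_k$ at $x_0$, and GSOARC in the direction $v=0$ --- are all in force.

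Finally, I would invoke Theorem~\ref{dl:1} with $v=0$ to produce $\lambda\in\mathbb{R}^p_{++}$, $\mu\in\mathbb{R}^m_{+}$ and $\nu\in\mathbb{R}^l$ satisfying \eqref{kkt:1}--\eqref{ys:2}. The key simplification is that every second-order generalized derivative vanishes along the zero direction: from Definition~\ref{def:2} one reads off $f_i^{\circ\circ}(x_0;0)=F''_i(x_0;0)=g_j^{\circ\circ}(x_0;0)=h_k^{\circ\circ}(x_0;0)=0$, so that the second-order stationarity inequality \eqref{kkt:2} collapses to the trivial $0\ge 0$ and the extra multiplier information \eqref{ys:2} becomes $0=0$. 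What survives are exactly the first-order strong stationarity relation \eqref{kkt:1} and the complementarity condition \eqref{ys:1}, which is the assertion of the corollary.

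Since every step is a direct specialization of the already-established dual condition, there is no genuine obstacle here; the proposal is essentially bookkeeping. The only two points requiring (very short) verification are the reduction of GSOARC to \eqref{abd:3} at $v=0$, which is recorded in Remark~\ref{rem:1}(i), and the vanishing of the Clarke and P\'ales--Zeidan generalized derivatives along the zero direction, which follows immediately from their defining difference quotients.
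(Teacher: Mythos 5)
Your proposal is correct and is exactly the route the paper intends: the remark preceding the corollary states that for $v=0$ the GSOARC reduces to \eqref{abd:3}, \eqref{kkt:2} (and \eqref{ys:2}) become trivial, and \eqref{kkt:1} with \eqref{ys:1} survive from Theorem \ref{dl:1}. Your added verifications that $0\in D(x_0)$ and that the Clarke and P\'ales--Zeidan derivatives vanish in the zero direction are accurate and fill in the bookkeeping the paper leaves implicit.
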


\begin{remark}
It is worth noting that $\lambda _i(f'_{i,G}( x_0 )-s_i F'_{i,G}( x_0 ))$ is identical to $\bar{\lambda} _i\left(\frac{f_i}{F_i}\right)'_G(x_0)$ by choosing $\bar{\lambda} _i=\frac{\lambda _i}{F_i(x_0)}$ according to Proposition \ref{pro:2.1}. Analogously, $\lambda _i(f_{i}^{\circ\circ}( x_0;v )-s_i F''_{i}( x_0;v))$ is identical to $\bar{\lambda} _i\left(\frac{f}{F} \right)^{\circ\circ} ( x_0;v )$.
\end{remark}

Since GSOGRC is weaker than GSOARC,  a natural question is whether the results presented in Theorem \ref{dl:1} hold under the GSOGRC assumption.
The answer is not affirmative, as evident from the following Example \ref{zjl:1}.

\begin{example}\label{zjl:1}
 Let $p=m=2$ and $l=1$.
Consider the  NMFP problem where the functions $f_i, F_i, g_j, h_k : \mathbb{R}^3\rightarrow \mathbb{R}$ are
as follows:
\begin{align*}
& f_1(x)=-3x_1+x_2,\, f_2(x)=2x_1-3x_2,\, F_1(x)=F_2(x)=1+x_1+x_2,  \\
& g_1(x)=-x_1,\, g_2(x)=-x_2 ~\text{ and }~ h(x)=x_1x_2.
\end{align*}
The feasible set for this problem is
$$X=\left\{ \left( x_1,x_2,x_3 \right) \in \mathbb{R}^3:x_1\ge 0,x_2\ge 0,x_1x_2=0 \right\}.$$
Clearly, $x_0=0$ is a Borwein properly efficient solution for NMFP  since
  $$T\left( \left(\frac{f}{F}\right)\left( X \right) +\mathbb{R}_{+}^{2},\left(\frac{f}{F}\right)\left( x_0 \right) \right) \cap \left( -\mathbb{R}_{+}^{2} \right) =\left\{ 0 \right\}.$$
By directly calculation, we  obtain $T(X,x_0)=X$, $s=(0,0)$,
\begin{align*}
& C(Q,x_0)=\left\{ \left( v_1,v_2,v_3 \right) \in \mathbb{R}^3:v_1\ge 0,v_1\le v_2\le 6v_1 \right\}, \\
& D(x_0)=T(X,x_0)\cap C(Q,x_0)=\left\{ \left( v_1,v_2,v_3 \right) \in \mathbb{R}^3:v_1=v_2=0 \right\},
\end{align*}
and for arbitrary $v\in D(x_0)$,
\begin{align*}
& \widetilde{T}^2\left( X,x_0,v \right)=\left\{ \left( w,s \right) \in \mathbb{R}^3\times \mathbb{R}:w\in T\left( X,x_0 \right), s\ge 0 \right\} \\
\text{ and }~ & \widetilde{C}^2\left( Q,x_0,v \right)=\left\{ \left( w,s \right) \in \mathbb{R}^3\times \mathbb{R}:w\in C\left( Q,x_0 \right), s\ge 0 \right\}.
\end{align*}
It can be seen that GSOARC \eqref{abd:1} is not satisfied and GSOGRC \eqref{gui:1} is satisfied at $x_0$.
We point out that the conclusion derived in Theorem \ref{dl:1} is not true for this example.
Indeed, because if there exist $\lambda_1>0$, $\lambda_2>0$, $\mu_1\ge0$, $\mu_2\ge0$ and $\nu\in \mathbb{R}$
 such that \eqref{kkt:1} is fulfilled, i.e.,
\begin{align*}
& \lambda _1(\nabla f_1\left( x_0 \right)-s_1\nabla F_1(x_0)) +\lambda _2(\nabla f_2\left( x_0 \right)-s_2\nabla F_2(x_0)) \\
& + \mu _1\nabla g_1\left( x_0 \right) + \mu_2 \nabla g_2\left( x_0 \right) +\nu \nabla h\left( x_0 \right) = 0,
\end{align*}
then $\lambda _1=-\frac{1}{7}(3\mu _1+2\mu _2) \le 0$ and $\lambda _2=-\frac{1}{7}(\mu _1+3\mu _2) \le 0$,
which are contradictory to $\lambda_1 > 0$ and $\lambda_2 > 0$.
Therefore, Theorem \ref{dl:1} is not true under the GSOGRC assumption.
\end{example}

\section{Second-order KKT sufficient optimality conditions for NMFP}\label{sec:4}
This section studies second-order sufficient optimality conditions for NMFP under second-order generalized convexity assumptions. To start with, we introduce some notions of second-order generalized convexity.

\begin{definition}\label{tx:1}
Let $x_0\in X\subseteq U$. A locally Lipschitz continuous function $\vartheta: U \rightarrow \mathbb{R}$ is said to be
\begin{enumerate}[(i)]
\item  second-order \emph{convex} at $x_0$ with respect to $X$ if for any $x\in X$ there exist $v\in Z\setminus \{0\}$ and $w\in Z \setminus\{0\}$ such that
$$\vartheta( x) -\vartheta \left( x_0 \right) \geq \vartheta^{\circ}\left( x_0;w\right) +\tfrac{1}{2}\vartheta^{\circ\circ}\left( x_0;v \right),$$

\item  second-order \emph{pseudoconvex} at $x_0$ with respect to $X$ if  for any $x\in X$ there exist $v\in Z \setminus\{0\}$ and $w\in Z \setminus\{0\}$ such that
   $$\vartheta^{\circ}\left( x_0;w\right) +\tfrac{1}{2}\vartheta^{\circ\circ}\left( x_0;v \right) \geq 0 ~\Longrightarrow~ \vartheta\left( x \right) \geq \vartheta\left( x_0 \right),$$

\item  second-order \emph{quasiconvex} at $x_0$ with respect to $X$ if  for any $x\in X$ there exist $v\in Z \setminus\{0\}$ and $w\in Z \setminus \{0\}$ such that
   $$\vartheta\left( x \right) \leq \vartheta\left( x_0 \right) ~\Longrightarrow~ \vartheta^{\circ}\left( x_0;w\right) +\tfrac{1}{2}\vartheta^{\circ\circ}\left( x_0;v \right) \leq 0,$$
\item  second-order \emph{infine} at $x_0$ with respect to $X$ if for any $x\in X$ there exist $v\in Z\setminus\{0\}$ and $w\in Z\setminus\{0\}$ such that
   $$\vartheta\left( x \right) -\vartheta\left( x_0 \right) = \vartheta^{\circ}\left( x_0;w\right) +\tfrac{1}{2}\vartheta^{\circ\circ}\left( x_0;v \right).$$
\end{enumerate}
\end{definition}

\begin{remark}\label{rem:2}
 It is noteworthy that if a function $\vartheta$ is second-order infine at $x_0$ with respect to $X$, then  $\vartheta$ is second-order convex at $x_0$ with respect to $X$. The second-order convexity of $\vartheta$  at $x_0$ with respect to $X$ implies the second-order pseudoconvexity and second-order quasiconvexity of $\vartheta$ at $x_0$ with respect to $X$. Besides, if $\vartheta: U \rightarrow \mathbb{R}$ is locally Lipschitz continuous and  G\^ateaux differentiable at $x_0$, and $w=v= x- x_0$ in Definition \ref{tx:1}(i)(ii)(iii), then the second-order convexity, second-order pseudoconvexity and second-order quasiconvexity of $\vartheta$  at $x_0$ with respect to $X$ reduce to the corresponding second-order convexity of $\vartheta$  at $x_0$ introduced in \cite[Definition 4.1]{Luu}.
\end{remark}

\begin{example}
\begin{enumerate}
\item[{\rm (i)}]
Consider the function $\vartheta(x) = x^2$, $x \in \mathbb{R}$, and $x_0=0$. By direct calculation, we get $\vartheta^{\circ}( 0;w ) =\nabla \vartheta( 0) w =0$ for all $w\in \mathbb{R}$, and $\vartheta^{\circ\circ}(0;v)=\nabla ^2\vartheta( 0 ) v^2=2v^2\ge0$. Letting $v=x-x_0=x$, one has
$$\vartheta( x ) -\vartheta( x_0) = \vartheta^{\circ}( x_0;w) + \tfrac{1}{2}\vartheta^{\circ\circ}( x_0; v ).$$
Therefore, $\vartheta$ is second-order infine at $x_0$ with respect to  $\mathbb{R}$.
In addition, from Remark \ref{rem:2}, it is obvious that $\vartheta$ is also second-order convex
(pseudoconvex and quasiconvex) at $x_0$ with respect to  $\mathbb{R}$.

\item[{\rm (ii)}]
Consider the function $\vartheta(x) = |x|$ and  $x_0 = 0$.
By direct calculation, we get $\vartheta^{\circ}( 0;w ) =|w|$,
and $\vartheta^{\circ\circ}(0;v)=0$ for all $v\in \mathbb{R}$. Hence, for any $x\ne0$,
letting $w:=x-x_0=x$, we get
    $\vartheta^{\circ}( x_0;w )+\tfrac{1}{2}\vartheta^{\circ\circ}( x_0; v )=|x|>0$ for all $v\in \mathbb{R}$.
So, $\vartheta$ is second-order pseudoconvex at $x_0$ with respect to $\mathbb{R}$.
\end{enumerate}
\end{example}

\begin{definition}\label{def:4.2}
A function $f:=\left( f_1,f_2,\ldots,f_p \right )^{\top}: U\rightarrow \mathbb{R}^p$ is called second-order convex (respectively, pseudoconvex, quasiconvex, and infine) at $x_0$ with respect to $X$ if its components $f_i, \,i\in I$ are second-order convex (respectively, pseudoconvex, quasiconvex, and infine) at $x_0$ with respect to $X$ and common $v \in Z $ and $w \in Z$.
\end{definition}

By direct calculation, one easily gets the following results.

\begin{proposition}\label{second:convex}
Let $f_{1}$ and $f_{2}$ be two real-valued locally Lipschitz functions on $U$ with $f_{2}$ being positive, and $x_0\in X\subseteq U$.
 Suppose that for each $x\in U$, both $f'_{2,G}(x)$ and $f''_{2}(x;v)$ exist. Then,
\begin{enumerate}
\item[{\rm (i)}]
 for each $\beta>0$, $\beta f_{1}$ is second-order convex (pseudoconvex, quasiconvex, or infine) at $x_0$ with respect to $X$ if so is $f_{1}$;

\item[{\rm (ii)}]
$f_{1}+f_{2}$ is second-order convex (infine) at $x_0$ with respect to $X$ if so are $f_{1}$ and $f_{2}$ with a common $v\in Z$ and $w\in Z$.
\end{enumerate}
\end{proposition}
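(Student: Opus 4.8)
The plan is to reduce both parts to the elementary behaviour of the Clarke directional derivative $(\cdot)^\circ$ and the P\'ales--Zeidan second-order derivative $(\cdot)^{\circ\circ}$ under positive scalar multiplication (for (i)) and under summation (for (ii)), and then simply combine the defining (in)equalities of Definition \ref{tx:1}. No delicate limiting argument should be needed beyond the calculus already recorded in Remark \ref{rem:2.2} and Proposition \ref{pro:2.1}; this is why the authors can call it a direct calculation.

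For (i), fix $\beta>0$ and $x\in X$, and let $v,w$ be the directions witnessing the relevant second-order property of $f_1$. By Remark \ref{rem:2.2}(ii) we have $(\beta f_1)^\circ(x_0;w)=\beta f_1^\circ(x_0;w)$ and $(\beta f_1)^{\circ\circ}(x_0;v)=\beta f_1^{\circ\circ}(x_0;v)$. In the convex and infine cases I would just multiply the defining (in)equality for $f_1$ through by $\beta$ and read off the corresponding relation for $\beta f_1$ with the \emph{same} $v,w$. In the pseudoconvex and quasiconvex cases the argument is even softer: since $\beta>0$, multiplication by $\beta$ preserves the sign of both $f_1^\circ(x_0;w)+\tfrac12 f_1^{\circ\circ}(x_0;v)$ and $f_1(x)-f_1(x_0)$, so the implications of Definition \ref{tx:1}(ii),(iii) transfer verbatim from $f_1$ to $\beta f_1$. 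Note that (i) uses only properties of $f_1$, so the smoothness hypotheses on $f_2$ are not needed here.

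For (ii), let $v,w$ be the common directions for which both $f_1$ and $f_2$ satisfy the second-order convex (resp.\ infine) relation at $x_0$, and add the two (in)equalities. To identify the resulting right-hand side with the corresponding expression for $f_1+f_2$, I would invoke the additivity
\[
(f_1+f_2)^\circ(x_0;w)=f_1^\circ(x_0;w)+f_2^\circ(x_0;w),\qquad
(f_1+f_2)^{\circ\circ}(x_0;v)=f_1^{\circ\circ}(x_0;v)+f_2^{\circ\circ}(x_0;v),
\]
which is the additive counterpart of Proposition \ref{pro:2.1}(iii),(v): because $f_2$ is G\^ateaux differentiable and $f_2''(x_0;v)$ exists, the difference quotients defining $f_2^\circ$ and $f_2^{\circ\circ}$ are genuine limits (namely $\langle f'_{2,G}(x_0),\cdot\rangle$ and $f_2''(x_0;v)$), so they separate cleanly from the $\limsup$ associated with $f_1$. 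Adding then produces exactly $(f_1+f_2)(x)-(f_1+f_2)(x_0)\ge$ (resp.\ $=$) $(f_1+f_2)^\circ(x_0;w)+\tfrac12(f_1+f_2)^{\circ\circ}(x_0;v)$, which is the claim.

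The main obstacle is precisely this additivity in (ii). For two arbitrary locally Lipschitz functions both $(\cdot)^\circ$ and $(\cdot)^{\circ\circ}$ are only \emph{subadditive}, and the first-order term $-t(f_1+f_2)^\circ(x_0;v)$ sitting inside the second-order difference quotient blocks any attempt to close the estimate with subadditivity alone (pushing it the wrong way). The role of the hypotheses on $f_2$ is exactly to upgrade these subadditive inequalities to equalities: together with the Clarke regularity available in this setting they force $f_2^\circ=\langle f'_{2,G}(x_0),\cdot\rangle$ and, via Remark \ref{rem:2.2}(ii), $f_2^{\circ\circ}=f_2''(x_0;v)$, after which the contribution of $f_2$ is an honest limit and splits off. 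I would therefore isolate this additivity statement as a short preliminary step, so that the two convexity cases of (ii) (and, trivially, the four cases of (i)) follow as one-line consequences of adding the defining relations.
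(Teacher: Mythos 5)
The paper offers no written proof of this proposition beyond the phrase ``by direct calculation,'' and your argument is exactly the calculation the authors intend: positive homogeneity of $(\cdot)^\circ$ and $(\cdot)^{\circ\circ}$ (Remark \ref{rem:2.2}(ii)) for part (i), and for part (ii) the additivity of both derivatives across $f_1+f_2$, which follows from the paper's own Proposition \ref{pro:2.1}(iii),(v) (with the sign of $\vartheta_2$ reversed) since the hypotheses on $f_2$ force $f_2^{\circ}(x_0;\cdot)=\langle f'_{2,G}(x_0),\cdot\rangle$ and $f_2^{\circ\circ}(x_0;v)=f''_2(x_0;v)$. Your identification of this additivity --- rather than mere subadditivity --- as the one substantive point is correct, so the proposal is sound and coincides with the paper's approach.
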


For simplicity, we take the following assumption to derive second-order sufficient optimality conditions.
\begin{framed}
\begin{assumption}\label{js:2}
All the functions involved in {\rm NMFP} exhibit a form of
second-order convexity in the sense of Definition \ref{def:4.2}
with a common $v\in D(x_0)$ and $w\in Z$.
\end{assumption}
\end{framed}

Next, we present second-order sufficient optimality conditions for NMFP.

\begin{theorem}\label{dl:2}
Let $x_0\in X$, $s=\frac{f(x_0)}{F(x_0)}$ and let $f-s\ast F$ and $g$ be second-order convex at $x_0$ with respect to $X$, $h$ be second-order infine at $x_0$ with respect to $X$ and Assumptions  \ref{js:1} and \ref{js:2} hold.
Assume that there exist $\lambda \in \mathbb{R}^p_{++}$, $\mu \in \mathbb{R}^m_+ $ and $ \nu \in \mathbb{R}^l$  such that for all $v \in D(x_0)$ and $w \in Z$,
\begin{align}
& \lambda^\top (f^{\circ}( x_0;w ) - \langle s*F'_{G}( x_0 ), w\rangle ) + \sum_{j=1}^m \mu_jg_{j}^{\circ}( x_0;w ) + \sum_{k=1}^l \nu _kh_{k}^{\circ}( x_0;w ) = 0, \label{cf:3}  \\
& \lambda^\top (f^{\circ\circ}( x_0; v ) - s * F''(x_0;v)) + \sum_{j=1}^m{\mu_j g_{j}^{\circ\circ}( x_0;v )} + \sum_{k=1}^l{\nu _kh_{k}^{\circ\circ}( x_0;v )}\ge 0 \label{cf:2} \\
\text{ and }~ & \sum_{j=1}^m \mu_jg_j(x_0) = 0. \label{cf:1}
\end{align}
Then, $x_0$ is a Pareto efficient solution of {\rm NMFP}.
\end{theorem}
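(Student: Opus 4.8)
The plan is to argue by contradiction after first reducing to the parametric problem $s$-MFP. By Lemma \ref{lem:2.1} it suffices to prove that $x_0$ is a Pareto efficient solution of $s$-MFP with $s=\frac{f(x_0)}{F(x_0)}$. So I would suppose the contrary: that there is a feasible $x\in X$ with $(f-s\ast F)(x)\le (f-s\ast F)(x_0)$. The key preliminary observation is that $s_i=\frac{f_i(x_0)}{F_i(x_0)}$ forces $(f_i-s_iF_i)(x_0)=0$ for every $i\in I$, so the contradiction hypothesis simply reads $(f_i-s_iF_i)(x)\le 0$ for all $i\in I$, with strict inequality for at least one index.

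Next I would bring in the second-order generalized convexity hypotheses. Using Assumption \ref{js:2} I fix a single pair $(v,w)$ with $v\in D(x_0)$ and $w\in Z$ serving simultaneously for $f-s\ast F$, $g$, and $h$ at this $x$. Applying Definition \ref{tx:1}(i) componentwise to $f_i-s_iF_i$ and rewriting the generalized derivatives through the difference and quotient rules of Proposition \ref{pro:2.1}(iii),(v), which express $(f_i-s_iF_i)^\circ$ and $(f_i-s_iF_i)^{\circ\circ}$ in terms of $f_i^\circ,f_i^{\circ\circ},\langle F'_{i,G}(x_0),\cdot\rangle$ and $F''_i(x_0;v)$ (legitimate under Assumption \ref{js:1}(ii)), yields for each $i$ the estimate $(f_i-s_iF_i)(x)\ge f_i^\circ(x_0;w)-s_i\langle F'_{i,G}(x_0),w\rangle+\tfrac12\bigl(f_i^{\circ\circ}(x_0;v)-s_iF''_i(x_0;v)\bigr)$. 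For $g$, second-order convexity gives the analogous inequalities, which after scaling by $\mu_j\ge 0$, summing, and using the complementarity \eqref{cf:1}, i.e. $\sum_j\mu_jg_j(x_0)=0$, produce $\sum_j\mu_jg_j(x)\ge \sum_j\mu_jg_j^\circ(x_0;w)+\tfrac12\sum_j\mu_jg_j^{\circ\circ}(x_0;v)$. For $h$, second-order infineness (Definition \ref{tx:1}(iv)) together with $h_k(x)=h_k(x_0)=0$ gives exactly $h_k^\circ(x_0;w)+\tfrac12 h_k^{\circ\circ}(x_0;v)=0$ for every $k\in K$.

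I would then aggregate: scale the $f$-estimates by $\lambda_i>0$, keep the summed $g$-inequality, and add the $\nu_k$-weighted $h$-identities. The first-order stationarity \eqref{cf:3} makes the aggregated bracket of $\circ$-terms vanish, while the second-order condition \eqref{cf:2} makes the aggregated bracket of $\circ\circ$-terms nonnegative. This leaves $\lambda^\top(f-s\ast F)(x)+\sum_j\mu_jg_j(x)\ge 0$. Since $\mu_j\ge 0$ and $g_j(x)\le 0$ by feasibility of $x$, we have $\sum_j\mu_jg_j(x)\le 0$, whence $\lambda^\top(f-s\ast F)(x)\ge 0$. But $\lambda\in\mathbb{R}^p_{++}$ and $(f_i-s_iF_i)(x)\le 0$ for all $i$ with strict inequality somewhere force $\lambda^\top(f-s\ast F)(x)=\sum_i\lambda_i(f_i-s_iF_i)(x)<0$, the desired contradiction. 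Hence $x_0$ is Pareto efficient for $s$-MFP and, by Lemma \ref{lem:2.1}, for NMFP.

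The difficulty here is organizational rather than conceptual: the main point to be careful about is that one and the same pair $(v,w)$ must be used across all of $f-s\ast F$, $g$, and $h$, which is precisely what Assumption \ref{js:2} guarantees, so that the multiplier relations \eqref{cf:3}--\eqref{cf:2} (asserted for \emph{all} $v\in D(x_0)$ and $w\in Z$) can be evaluated at exactly this pair. The remaining delicate bookkeeping is tracking the two uses of the constraint data, namely the complementarity $\sum_j\mu_jg_j(x_0)=0$ and the feasibility sign $\sum_j\mu_jg_j(x)\le 0$, and correctly transferring the generalized-derivative calculus of Proposition \ref{pro:2.1} into the convexity inequalities.
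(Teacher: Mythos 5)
Your proposal is correct and follows essentially the same route as the paper's proof: reduction to $s$-MFP via Lemma \ref{lem:2.1}, a contradiction hypothesis, application of the second-order convexity/infineness inequalities at a common pair $(v,w)$ supplied by Assumption \ref{js:2} together with the calculus of Proposition \ref{pro:2.1}, and aggregation with the multipliers using \eqref{cf:3}--\eqref{cf:1} and feasibility. The only difference is cosmetic: you exploit $(f-s\ast F)(x_0)=0$ to isolate the sign of $\lambda^{\top}(f-s\ast F)(x)$, whereas the paper keeps the full Lagrangian-type expression on both sides and contradicts the strict inequality directly.
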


\begin{proof}
From Lemma \ref{lem:2.1}, it suffices to prove that $x_0$ is a Pareto efficient solution of the problem \eqref{smfp:1}.
On the contrary, if $x_0$ is not a Pareto efficient solution of the problem \eqref{smfp:1}, then there exists $\bar{x}\in X$ for which
\begin{align}\label{eq24}
(f-s\ast F)(\bar x)\le (f-s\ast F)(x_0).
\end{align}
Due to $\bar{x},x_0\in X$ and $\lambda \in \mathbb{R}^p_{++}$, \eqref{cf:1} and \eqref{eq24} yield that
\begin{equation}\label{fz:1}
\begin{aligned}
&\lambda^\top (f - s*F)\left( x_0 \right) + \sum_{j=1}^m{\mu _jg_j\left( x_0 \right)}+\sum_{k=1}^l{\nu _kh_k\left( x_0 \right)} \\
& > \lambda^\top (f - s*F)\left( \bar{x} \right) +\sum_{j=1}^m{\mu _jg_j\left( \bar{x} \right)}+\sum_{k=1}^l{\nu _kh_k\left( \bar{x} \right)}.
\end{aligned}
\end{equation}
Note that $f-s\ast F$ and $g$ are second-order convex at $x_0$ with respect to $X$, and $h$ is second-order infine at $x_0$ with respect to $X$.
Using Proposition \ref{pro:2.1}, Assumption \ref{js:2} yields that there exist $v\in D(x_0)$ and $ w \in Z $ such that
\begin{align}\label{star_dg}
~&~  \lambda^\top (f - s*F)( \bar{x} ) + \sum_{j=1}^m{\mu _jg_j( \bar{x} )}+\sum_{k=1}^l{\nu _kh_k( \bar{x} )} \notag \\
 \geq ~&~  \lambda^\top (f - s*F)( x_0 ) +\sum_{j=1}^m{\mu _jg_j( x_0 )} +\sum_{k=1}^l{\nu _kh_k( x_0 )}  \notag \\
~&~ + \lambda^\top (f - s*F)^{\circ}( x_0;w ) + \sum_{j=1}^m{\mu _jg_{j}^{\circ}( x_0;w ) }+\sum_{k=1}^l{\nu _kh_{k}^{\circ}( x_0;w ) }  \notag \\
~&~ + \tfrac{1}{2} \lambda^\top (f - s*F)^{\circ \circ}( x_0;v ) + \sum_{j=1}^m{\tfrac{1}{2}\mu _jg_{j}^{\circ \circ}\left( x_0;v \right)}+\sum_{k=1}^l{\tfrac{1}{2}\nu _kh_{k}^{\circ \circ}( x_0;v )}  \notag \\
= ~&~ \lambda^\top (f - s*F)( x_0 ) +\sum_{j=1}^m{\mu _jg_j( x_0 )} + \sum_{k=1}^l{\nu _kh_k( x_0 )}  \notag \\
~&~ + \lambda^\top (f^{\circ}( x_0; w ) - \langle s*F'_{G}( x_0 ), w\rangle) + \sum_{j=1}^m{\mu _jg_{j}^{\circ}( x_0;w )}+\sum_{k=1}^l{
\nu _kh_{k}^{\circ}( x_0;w )}  \notag \\
~&~ + \tfrac{1}{2}\left(\lambda^\top (f^{\circ\circ}\left( x_0;v \right)-s*F''(x_0;v)) + \sum_{j=1}^m{\mu _jg_{j}^{\circ\circ}\left( x_0;v \right)}+\sum_{k=1}^l{\nu _kh_{k}^{\circ\circ}( x_0;v )}\right)  \notag \\
\overset{\eqref{cf:3} \& \eqref{cf:2}}{\ge} ~&~  \lambda^\top (f - s*F) ( x_0 ) + \sum_{j=1}^m{\mu _jg_j ( x_0 )} + \sum_{k=1}^l{\nu _k h_k ( x_0 )},
\end{align}
which is contradictory to \eqref{fz:1}.
Therefore, $x_0$ is a Pareto efficient solution of the problem \eqref{smfp:1}, and hence the proof is completed.
\end{proof}

\begin{remark}
From the proof of Theorem \ref{dl:2}, we see that if $\lambda \in \mathbb{R}^p_{++}$ is replaced by $\lambda \in \mathbb{R}^p_{+}\setminus\{0\}$ and the inequality ($\ge$) in \eqref{cf:2} is replaced by the strict inequality ($>$), then $x_0$ is a weak efficient solution of NMFP under the assumptions of Theorem \ref{dl:2}.
\end{remark}

\begin{theorem}\label{dl:3}
Let $x_0\in X$ and $s=\frac{f(x_0)}{F(x_0)}$.
Assume that there exist $\lambda \in \mathbb{R}^p_{++}$, $\mu \in \mathbb{R}^m_+ $ and $\nu \in \mathbb{R}^l$ such that \eqref{cf:3}--\eqref{cf:1} hold.
If $\lambda ^\top  (f-s\ast F)$ is second-order pseudoconvex at $x_0$ with respect to $X$,
$\mu ^\top g$ is second-order quasiconvex at $x_0$ with respect to $X$,
 $h$ is second-order infine at $x_0$ with respect to $X$ and Assumptions  \ref{js:1} and \ref{js:2} hold,
 then $x_0$ is a Pareto efficient solution of {\rm NMFP}.
\end{theorem}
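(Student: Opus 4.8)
The plan is to mirror the contradiction argument of Theorem~\ref{dl:2}, but to exploit the weaker pseudoconvexity and quasiconvexity hypotheses through their defining implications rather than through a convex inequality. By Lemma~\ref{lem:2.1}, it suffices to show that $x_0$ is a Pareto efficient solution of the parametric problem \eqref{smfp:1}. Suppose, to the contrary, that some $\bar x\in X$ satisfies $(f-s\ast F)(\bar x)\le(f-s\ast F)(x_0)$. Since $\lambda\in\mathbb{R}^p_{++}$, scalarizing this relation yields the strict inequality $\lambda^\top(f-s\ast F)(\bar x)<\lambda^\top(f-s\ast F)(x_0)$.

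By Assumption~\ref{js:2}, there is a common pair $v\in D(x_0)$ and $w\in Z$ realizing the second-order generalized convexity of all involved functions at $\bar x$. First I would invoke the contrapositive of the second-order pseudoconvexity of $\lambda^\top(f-s\ast F)$ in Definition~\ref{tx:1}(ii): the strict inequality above forces
$$(\lambda^\top(f-s\ast F))^\circ(x_0;w)+\tfrac12(\lambda^\top(f-s\ast F))^{\circ\circ}(x_0;v)<0.$$
Next, feasibility of $\bar x$ gives $g(\bar x)\leqq0$, so with $\mu\in\mathbb{R}^m_+$ we have $\mu^\top g(\bar x)\le0=\mu^\top g(x_0)$ by \eqref{cf:1}; the second-order quasiconvexity of $\mu^\top g$ in Definition~\ref{tx:1}(iii) then delivers
$$(\mu^\top g)^\circ(x_0;w)+\tfrac12(\mu^\top g)^{\circ\circ}(x_0;v)\le0.$$
Finally, since $h(\bar x)=h(x_0)=0$, the second-order infine property of each $h_k$ in Definition~\ref{tx:1}(iv) yields $h_k^\circ(x_0;w)+\tfrac12 h_k^{\circ\circ}(x_0;v)=0$, and hence $\nu^\top h^\circ(x_0;w)+\tfrac12\,\nu^\top h^{\circ\circ}(x_0;v)=0$.

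Adding these three relations produces a strictly negative total. The decisive step is to recognize, via Proposition~\ref{pro:2.1}(iii) and (v) applied componentwise to $f-s\ast F$, that the aggregated first-order part equals exactly the left-hand side of \eqref{cf:3}, namely $0$, while the aggregated second-order part equals the left-hand side of \eqref{cf:2}, which is $\ge0$. Thus the total sum equals $0+\tfrac12(\ge0)\ge0$, contradicting its strict negativity. This contradiction shows that no such $\bar x$ exists, so $x_0$ is Pareto efficient for \eqref{smfp:1} and therefore, by Lemma~\ref{lem:2.1}, for NMFP.

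The hard part will be the bookkeeping that one and the same pair $(v,w)$ serves all three implications simultaneously — which is precisely the role of Assumption~\ref{js:2} — together with the careful conversion of the Clarke first- and second-order derivatives of the difference $f-s\ast F$ into the separate $f$- and $F$-terms appearing in \eqref{cf:3}--\eqref{cf:2}. Unlike Theorem~\ref{dl:2}, a convex inequality is not at hand, so the strict inequality must be manufactured solely from the pseudoconvexity of the leading objective combination, with quasiconvexity and the infine property contributing only the nonstrict ($\le$ and $=$) estimates; tracking where the single strict sign enters the summation is the main subtlety.
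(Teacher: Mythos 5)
Your proof is correct and takes essentially the same route as the paper's: the same three estimates (a strict one from the pseudoconvexity of $\lambda^{\top}(f-s\ast F)$, a nonstrict one from the quasiconvexity of $\mu^{\top}g$ using $\mu^{\top}g(\bar x)\le 0=\mu^{\top}g(x_0)$, and an equality from the infineness of each $h_k$) are combined with \eqref{cf:3}--\eqref{cf:1} at the common pair $(v,w)$ supplied by Assumption~\ref{js:2}. The only difference is organizational --- the paper argues directly by deriving the antecedent $\phi^{\circ}(x_0;w)+\tfrac{1}{2}\phi^{\circ\circ}(x_0;v)\ge 0$ from the KKT relations and then invoking pseudoconvexity to get $\phi(x)\ge\phi(x_0)$ for all $x\in X$, whereas you run the contrapositive inside a proof by contradiction; these are logically the same argument.
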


\begin{proof}
Consider the functions $\phi( x ) :=\lambda^{\top} (f-s\ast F)(x)$ and $\psi( x ) :=\mu^{\top} g( x )$ for $x\in X$. Then, we have
$$\psi(x)\leq \psi(x_0)\le 0\,\, \text{ and }~   h(x)=h(x_0)=0,\,\,\forall\,  x\in X.$$
Since $\mu ^\top g$ is second-order quasiconvex at $x_0$ with respect to $X$ and $h$ is second-order infine at $x_0$ with respect to $X$, Assumptions  \ref{js:2} yields that for any $x\in X$, there exist
 $v\in D(x_0)$ and $w\in Z$ such that
\begin{align*}
\psi^{\circ}( x_0;w ) +\tfrac{1}{2}\psi^{\circ\circ}( x_0;v ) \leq 0,\,\,\,
h_k^{\circ}( x_0;w )+\tfrac{1}{2}h_k^{\circ\circ}( x_0;v )= 0,\,\, k\in K.
\end{align*}
Moreover, we have
\begin{align*}
\sum_{j=1}^m{\mu _jg_{j}^{\circ}( x_0;w ) +\tfrac{1}{2}\sum_{j=1}^m{\mu _jg_{j}^{\circ\circ}( x_0;v )}}\leq 0
~\text{ and }~
\sum_{k=1}^l{\nu _k h_k^{\circ}( x_0;w )+\tfrac{1}{2}\sum_{k=1}^l{\nu _kh_k^{\circ\circ}( x_0;v )}}=0.
\end{align*}
From \eqref{cf:3} and \eqref{cf:2}, we deduce that
\begin{align*}
\lambda^\top (f^{\circ}( x_0;w ) - \langle s*F'_{G}( x_0 ), w\rangle) +\tfrac{1}{2} \lambda^\top (f^{\circ\circ}( x_0;v )- s*F''( x_0;v )) \ge 0,
\end{align*}
i.e., $\phi^{\circ}( x_0;w ) +\tfrac{1}{2}\phi ^{\circ\circ}( x_0;v ) \ge 0.$
Since $\phi ( x )$ is second-order pseudoconvex  at $x_0$ with respect to $X$, we have $\phi ( x )\geq \phi( x_0 )$ for all $x\in X$, and so
\begin{equation}\label{yx:1}
\lambda^\top (f - s*F)( x ) \geq \lambda^\top (f - s*F)(x_0),\,\, \forall\, x\in X.
\end{equation}
Due to $\lambda \in \mathbb{R}^p_{++}$, \eqref{yx:1} implies that
\begin{align*}
  (f-s*F)(x_0)-(f-s*F)(x)\notin \mathbb{R}^p_{+} \setminus \{0\},\,\forall \,x\in X.
 \end{align*}
It, therefore, follows from Definition \ref{def:6} that $x_0$ is a Pareto efficient solution of the problem \eqref{smfp:1}, and hence it is a Pareto efficient solution of NMFP by Lemma \ref{lem:2.1}.
 The proof is completed.
\end{proof}

In the following, we exemplify Theorem \ref{dl:3}.
\begin{example}
Let $p=m=2$ and $l=1$.
Consider the NMFP problem, where the functions $f_i$, $F_i$, $g_j$, $h_k$: $\mathbb{R}^2\rightarrow \mathbb{R}$
are defined by
\begin{align*}
& f_1(x):=3x_1^4+5x_1^2+6x_2^2,\,\, f_2(x):=-2x_2^2,\,\, F_1(x)=F_2(x):=x_1^2+x_2^2+1, \\
& g_1(x):=-x_1^2,\, g_2(x):=-x_2 \text{ and } h(x):=x_1^2.
\end{align*}
The feasible set for the problem is
$X=\left\{ ( x_1,x_2) \in \mathbb{R}^2:x_1= 0,x_2\ge 0 \right\}$, and for $x_0=(0,0)$, we get $s=(0,0)$ and $D(x_0)=X$. Take $\lambda_1=1$, $\lambda_2=2$, $\mu_1=1$, $\mu_2=0$ and $\nu=-2$. For each $x=(x_1,x_2) \in X$, there exist $v=x\in D(x_0)$ and $w=(1,1)$ such that $\lambda ^\top (f-s\ast F)$ is
 second-order convex (and pseudoconvex) at $x_0$ with respect to $X$,
 $\mu ^\top g$ is second-order quasiconvex at $x_0$ with respect to $X$, and $h$ is second-order infine at $x_0$ with respect to $X$.
In addition, for any $v\in D(x_0)$, we have \eqref{cf:3}--\eqref{cf:1}.  Observe that for any $x\in X$,
\begin{align*}
\frac{f(x_0)}{F(x_0)}- \frac{f(x)}{F(x)} = \left(  -  \frac{3x_1^4+5x_1^2+6x_2^2}{x_1^2+x_2^2+1}, \frac{2x_2^2}{x_1^2+x_2^2+1}   \right)^{\top} =  \left(  -  \frac{6x_2^2}{x_2^2+1}, \frac{2x_2^2}{x_2^2+1}   \right)^{\top} \notin \mathbb{R}^2_{+}\setminus \{0\}.
\end{align*}
It thus implies that $x_0$ is a Pareto efficient solution of NMFP.
\end{example}

\begin{remark}
From the proof of Theorem \ref{dl:3}, specifically beginning with \eqref{yx:1}, we can conclude that if $\lambda \in \mathbb{R}^p_{++}$ is replaced by $\lambda \in \mathbb{R}^p_{+} \setminus \{0\}$ in Theorem \ref{dl:3}, then $x_0$ is a weak efficient solution of NMFP under the assumptions of Theorem \ref{dl:3}.
\end{remark}



\section{Second-order duality}\label{sec:5}
In this section, we study  Mond-Weir-type second-order duality of NMFP.  Duality results between NMFP and its second-order dual problem are established under the generalized second-order convexity assumptions. A  \emph{Mond-Weir-type second-order  dual problem} (MWSD) of NMFP is formulated as follows:
\begin{equation}\label{mwd:1}
\resizebox{\textwidth}{!}{$
\begin{aligned}
\max \, ~&~~ \left(\frac{f}{F}\right)( u)\\
\mbox{s.t.}\, ~&~~ \lambda^\top(f'_{G}(u) - s*F'_{G}(u)) +\sum_{j=1}^m\mu _jg'_{j,G}( u ) +\sum_{k=1}^l\nu _k h'_{k,G}( u)=0,\\
~&~~ \lambda^\top(f^{\circ \circ}( u;v ) - s*F''( u;v )) + \sum_{j=1}^m\mu _jg_{j}^{\circ \circ}( u;v ) + \sum_{k=1}^l\nu _kh_k^{\circ \circ}( u ;v) \geq 0,\,\,\forall\, v\in D(u),\\
~&~~\sum_{j=1}^m \mu _jg_j( u)+\sum_{k=1}^l\nu _kh_k( u)\geq 0,\quad\sum_{i=1}^p \lambda_i=1,\\
~&~~(u,\lambda,\mu,\nu)\in U\times \mathbb{R}^p_{++} \times \mathbb{R}^m_{+}\times \mathbb{R}^l,
\end{aligned}
$}
\end{equation}
where $s=\frac{f(u)}{F(u)}$. A vector $(u,\lambda,\mu,\nu)$  satisfying all the constraints of MWSD is said to be a feasible solution of MWSD.
The set of feasible solutions of MWSD is denoted by  $\mathcal{F}_M$.

Throughout this section,  we always assume Assumption \ref{ass:5.1} hold.
\begin{framed}
\begin{assumption}\label{ass:5.1}
\begin{enumerate}
  \item[{\rm (i)}]
  $f_i, F_i\,(i\in I),\, g_j\, (j\in J)$ and $h_k\, (k\in K)$ are all locally Lipschitz continuous, G\^ateaux differentiable and regular in the sense of Clarke on $U$ with G\^ateaux derivative $f'_{i,G},\, F'_{i,G}, \, g'_{j,G}$ and $h'_{k,G}$, respectively;

  \item[{\rm (ii)}] For $i\in I,j\in J,k\in K$,
   $f_i^{\circ\circ}(u;v),\, g_j^{\circ\circ}(u;v),\, h_k^{\circ\circ}(u;v)$ and $F''_i(u,v)$ are all finite at each $u\in U$ for all  directions $v\in Z$.
\end{enumerate}
\end{assumption}
\end{framed}

We first give the weak duality between  NMFP and MWSD.

\begin{theorem}\label{dl:7} {\rm [Weak duality]}
Let $x\in X$ and $(u,\lambda,\mu,\nu)\in \mathcal{F}_M$.
Assume that $f-s\ast F$ and $g$ are second-order convex  at $u$ with respect to $U$,
 $h$ is second-order infine at $u$ with respect to $U$ and Assumption \ref{js:2} holds. Then,
\begin{align*}
  \left(\frac{f}{F}\right)( x ) \nleq \left(\frac{f}{F}\right) ( u ).
\end{align*}
\end{theorem}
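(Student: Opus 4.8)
The plan is to argue by contradiction and reduce the fractional comparison to the parametric ($s$-MFP) form, exactly along the lines of the proof of Lemma \ref{lem:2.1}. Suppose, contrary to the claim, that $\left(\frac{f}{F}\right)(x) \le \left(\frac{f}{F}\right)(u)$. Since $s=\frac{f(u)}{F(u)}$, the chain of equivalences of Lemma \ref{lem:2.1} (dividing by the positive vector $F(x)$ and using that $\mathbb{R}^p_+$ is a cone) gives $(f-s\ast F)(x)\le (f-s\ast F)(u)$. Moreover, $s=\frac{f(u)}{F(u)}$ forces $(f_i-s_iF_i)(u)=f_i(u)-\frac{f_i(u)}{F_i(u)}F_i(u)=0$ for every $i\in I$, so $(f-s\ast F)(u)=0$ and the assumed inequality becomes $(f-s\ast F)(x)\in-\mathbb{R}^p_+\setminus\{0\}$.

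First I would aggregate the second-order (sub)convexity inequalities. By Assumption \ref{js:2} there is a common $v\in D(u)$ and $w\in Z$ realizing the second-order convexity of $f-s\ast F$ and of $g$, and the second-order infineness of $h$, at $u$ in the sense of Definition \ref{tx:1}. Multiplying the componentwise inequalities by $\lambda_i>0$, $\mu_j\ge0$, and $\nu_k\in\mathbb{R}$ (the last preserved as an equality because $h$ is infine) and summing yields
\[
\lambda^\top(f-s\ast F)(x)+\mu^\top g(x)+\nu^\top h(x)-\big[\lambda^\top(f-s\ast F)(u)+\mu^\top g(u)+\nu^\top h(u)\big]\;\geq\;A_1+\tfrac12 A_2,
\]
where $A_1:=\lambda^\top(f-s\ast F)^\circ(u;w)+\mu^\top g^\circ(u;w)+\nu^\top h^\circ(u;w)$ collects the first-order Clarke terms and $A_2$ the corresponding second-order terms. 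I would then rewrite $A_1$ and $A_2$ via Proposition \ref{pro:2.1}(iii),(v), replacing $(f_i-s_iF_i)^\circ(u;w)$ by $f_i^\circ(u;w)-s_i\langle F'_{i,G}(u),w\rangle$ and $(f_i-s_iF_i)^{\circ\circ}(u;v)$ by $f_i^{\circ\circ}(u;v)-s_iF''_i(u;v)$.

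Next I would eliminate $A_1$ and sign $A_2$ using dual feasibility. Here Assumption \ref{ass:5.1}(i) is essential: since all functions are regular in the sense of Clarke and G\^ateaux differentiable at $u$, one has $\varphi^\circ(u;w)=\varphi'(u;w)=\langle\varphi'_G(u),w\rangle$ for each first-order term, so $A_1=\langle\lambda^\top(f'_G(u)-s\ast F'_G(u))+\sum_j\mu_jg'_{j,G}(u)+\sum_k\nu_kh'_{k,G}(u),\,w\rangle=0$ by the first constraint of \eqref{mwd:1}. Because the chosen $v$ lies in $D(u)$, the second constraint of \eqref{mwd:1} applies and gives $A_2\ge0$. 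Hence the displayed inequality reduces to $\lambda^\top(f-s\ast F)(x)+\mu^\top g(x)+\nu^\top h(x)-[\mu^\top g(u)+\nu^\top h(u)]\ge0$, using $\lambda^\top(f-s\ast F)(u)=0$. Finally I would invoke feasibility: primal feasibility of $x$ gives $h(x)=0$ and $\mu^\top g(x)\le0$ (as $\mu\ge0$, $g(x)\leqq0$), while dual feasibility gives $\mu^\top g(u)+\nu^\top h(u)\ge0$; combining these yields $\lambda^\top(f-s\ast F)(x)\ge0$. This contradicts $(f-s\ast F)(x)\in-\mathbb{R}^p_+\setminus\{0\}$ together with $\lambda\in\mathbb{R}^p_{++}$, which forces $\lambda^\top(f-s\ast F)(x)<0$; the contradiction proves $\left(\frac{f}{F}\right)(x)\nleq\left(\frac{f}{F}\right)(u)$.

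The hard part will be the careful reconciliation of the two notions of first-order derivative: the convexity inequalities of Definition \ref{tx:1} are written with Clarke generalized directional derivatives $\varphi^\circ(u;w)$, whereas the stationarity constraint of \eqref{mwd:1} is stated with the G\^ateaux derivatives $\varphi'_G(u)$. The identity $\varphi^\circ(u;w)=\langle\varphi'_G(u),w\rangle$ holds only through the Clarke regularity plus G\^ateaux differentiability of Assumption \ref{ass:5.1}(i), and it must be applied uniformly across $f_i,F_i,g_j,h_k$ before the first dual constraint can annihilate $A_1$. A secondary subtlety is guaranteeing that the single pair $(v,w)$ furnished by Assumption \ref{js:2} simultaneously serves all the convexity and infineness inequalities and satisfies $v\in D(u)$, so that the second dual constraint (quantified over $v\in D(u)$) is legitimately invoked; the sign bookkeeping that fuses primal feasibility ($\mu^\top g(x)\le0$, $h(x)=0$) with dual feasibility ($\mu^\top g(u)+\nu^\top h(u)\ge0$, $(f-s\ast F)(u)=0$) is then routine once this alignment is secured.
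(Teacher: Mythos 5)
Your proposal is correct and follows essentially the same route as the paper: reduce the contradiction hypothesis to $\lambda^\top(f-s\ast F)(x)<\lambda^\top(f-s\ast F)(u)$ via positivity of $F$, aggregate the second-order convexity/infineness inequalities with the multipliers, and use the two dual feasibility constraints of \eqref{mwd:1} together with primal feasibility to derive the opposite inequality. Your explicit attention to the identification $\varphi^{\circ}(u;w)=\langle\varphi'_{G}(u),w\rangle$ via Assumption \ref{ass:5.1}(i) and to $v\in D(u)$ is exactly the bookkeeping the paper delegates to the derivation of \eqref{star_dg} in Theorem \ref{dl:2}.
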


\begin{proof}
Suppose to the contrary that
\begin{equation}\label{mwd:1:1}
 \left(\frac{f}{F}\right)( x ) \le \left(\frac{f}{F}\right) ( u ).
\end{equation}
Taking into account that $F(x)\in \mathbb{R}_{++}^{p}$ for all $x\in X$, \eqref{mwd:1:1} implies that
\begin{align*}
\left(\frac{f}{F}\right)( x ) -\left(\frac{f}{F}\right) ( u ) \in - \mathbb{R}_{+}^{p} \setminus \{0\}
  &\Longleftrightarrow f(x)-s\ast F(x) \in - \mathbb{R}_{+}^{p} \setminus \{0\}\\
  &\Longleftrightarrow f(x)-s\ast F(x)\le f(u)-s\ast F(u).
\end{align*}
This together with $\lambda\in \mathbb{R}_{++}^{p}$ yields that
\begin{align}\label{mwd:2}
\lambda^\top (f - s*F)(x) < \lambda^\top (f - s*F)(u).
\end{align}
Since $x\in X$ and $(u,\lambda,\mu,\nu)\in \mathcal{F}_M$, we have  $g(x)\leqq0$, $h(x)=0$ and
$$\sum_{j=1}^m{\mu _jg_j( u )}+\sum_{k=1}^l\nu _k h_k ( u )\geq 0.$$
Combining with \eqref{mwd:2}, we have
\begin{equation}\label{eqn30:dg}
\begin{aligned}
&\lambda^\top (f - s*F)(x) + \sum_{j=1}^m{\mu _jg_j( x )} + \sum_{k=1}^l{\nu _kh_k( x )} \\
&< \lambda^\top (f - s*F)(u) + \sum_{j=1}^m{\mu _jg_j( u )} + \sum_{k=1}^l{\nu _kh_k( u )}.
\end{aligned}
\end{equation}
By the similar arguments as that in deriving \eqref{star_dg} in Theorem \ref{dl:2}, we get
\begin{equation*}
\begin{aligned}
& \sum_{i=1}^p \lambda_i(f_i-s_iF_i)(x) +\sum_{j=1}^m \mu_jg_j( x ) +\sum_{k=1}^l\nu_kh_k( x )\\
 & \geq\sum_{i=1}^p \lambda_i(f_i-s_iF_i)(u)+\sum_{j=1}^m \mu_jg_j( u )+\sum_{k=1}^l \nu_kh_k( u ),
\end{aligned}
\end{equation*}
which is a contradictory to \eqref{eqn30:dg}. Hence, the result follows. The proof is completed.
\end{proof}

\begin{theorem}\label{dl:8}{\rm [Strong duality]}
Let $x_0\in X$ be a Borwein-properly efficient solution of {\rm NMFP}.
Assume that all conditions of Theorem \ref{dl:1}  are satisfied.
Then, there exist $\bar{\lambda} \in \mathbb{R}^p_{++}$, $\bar{\mu} \in \mathbb{R}^m_{+}$ and $\bar{\nu} \in \mathbb{R}^l$ such that $(x_0,\bar{\lambda},\bar{\mu},\bar{\nu})\in \mathcal{F}_M$.
Furthermore, if all conditions of Theorem \ref{dl:7} hold, then $(x_0,\bar{\lambda},\bar{\mu},\bar{\nu})$ is a Pareto efficient solution of {\rm MWSD}.
\end{theorem}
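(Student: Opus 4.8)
The plan is to decompose the claim into a feasibility assertion and a maximality assertion, feed the multipliers produced by Theorem~\ref{dl:1} into the constraints of \eqref{mwd:1}, and then invoke the weak duality Theorem~\ref{dl:7} for the efficiency. For feasibility, since all hypotheses of Theorem~\ref{dl:1} hold, that theorem furnishes $\lambda\in\mathbb{R}^p_{++}$, $\mu\in\mathbb{R}^m_+$ and $\nu\in\mathbb{R}^l$ satisfying \eqref{kkt:1}--\eqref{ys:2}. Because the identity \eqref{kkt:1} and the inequality \eqref{kkt:2} are positively homogeneous of degree one in $(\lambda,\mu,\nu)$, I would rescale by $\Lambda:=\sum_{i=1}^p\lambda_i>0$, setting $\bar\lambda:=\lambda/\Lambda$, $\bar\mu:=\mu/\Lambda$, $\bar\nu:=\nu/\Lambda$, so that $\sum_{i=1}^p\bar\lambda_i=1$ while the signs $\bar\lambda\in\mathbb{R}^p_{++}$, $\bar\mu\in\mathbb{R}^m_+$, $\bar\nu\in\mathbb{R}^l$ are preserved. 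With $u=x_0$ and $s=f(x_0)/F(x_0)$, dividing \eqref{kkt:1} by $\Lambda$ yields the stationarity constraint of \eqref{mwd:1}, and dividing \eqref{kkt:2} by $\Lambda$ yields its second-order inequality constraint. The remaining inequality $\sum_{j=1}^m\bar\mu_jg_j(x_0)+\sum_{k=1}^l\bar\nu_kh_k(x_0)\ge 0$ in fact holds with equality: the complementarity \eqref{ys:1} gives $\bar\mu_jg_j(x_0)=0$ for every $j$, and $x_0\in X$ gives $h_k(x_0)=0$ for every $k$. Together with $x_0\in X\subseteq U$ and the normalization, this places $(x_0,\bar\lambda,\bar\mu,\bar\nu)$ in $\mathcal{F}_M$.

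The delicate point in this step is that the second-order constraint in \eqref{mwd:1} must hold for \emph{every} $v\in D(x_0)$, whereas Theorem~\ref{dl:1} produces \eqref{kkt:2} only for the single direction $v$ in whose terms GSOARC is posited, and the multipliers it returns may a priori depend on $v$. I would therefore read ``all conditions of Theorem~\ref{dl:1}'' as requiring GSOARC (hence \eqref{kkt:1}--\eqref{ys:2}) at $x_0$ along each $v\in D(x_0)$, and rely on the fact that \eqref{kkt:1} and \eqref{ys:1} are direction-free to fix a single normalized multiplier $(\bar\lambda,\bar\mu,\bar\nu)$ for which the normalized \eqref{kkt:2} is valid for all $v\in D(x_0)$; this is exactly the ``$\forall\,v\in D(u)$'' clause of the dual constraint. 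Reconciling the direction-wise second-order inequality with this dual feasibility requirement is where the bookkeeping must be done carefully.

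For the maximality assertion I would argue by contradiction using weak duality. If $(x_0,\bar\lambda,\bar\mu,\bar\nu)$ were not a Pareto efficient solution of the maximization problem MWSD, there would exist a feasible point $(\hat u,\hat\lambda,\hat\mu,\hat\nu)\in\mathcal{F}_M$ dominating it from above, i.e.\ $\left(\frac{f}{F}\right)(x_0)\le\left(\frac{f}{F}\right)(\hat u)$. But $x_0\in X$ is primal feasible and $(\hat u,\hat\lambda,\hat\mu,\hat\nu)\in\mathcal{F}_M$, so under the hypotheses of Theorem~\ref{dl:7} weak duality gives $\left(\frac{f}{F}\right)(x_0)\nleq\left(\frac{f}{F}\right)(\hat u)$, directly contradicting the dominance relation. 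Hence no such $\hat u$ exists and $(x_0,\bar\lambda,\bar\mu,\bar\nu)$ is Pareto efficient for MWSD. The efficiency step is thus a clean corollary of weak duality; the genuine obstacle of the proof is the feasibility bookkeeping described above, especially matching the fixed-direction output of Theorem~\ref{dl:1} to the all-directions second-order constraint of \eqref{mwd:1} while checking that the normalization preserves every constraint.
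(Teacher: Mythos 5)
Your proposal is correct and follows essentially the same route as the paper: feasibility of $(x_0,\bar{\lambda},\bar{\mu},\bar{\nu})$ obtained from Theorem~\ref{dl:1} together with complementarity \eqref{ys:1}, $h(x_0)=0$ and the normalization $\sum_{i=1}^p\bar{\lambda}_i=1$, followed by Pareto efficiency of the dual point via weak duality (Theorem~\ref{dl:7}). The subtlety you flag---that Theorem~\ref{dl:1} delivers \eqref{kkt:2} only for a fixed direction $v$ while the dual constraint in \eqref{mwd:1} demands it for every $v\in D(x_0)$ with a single multiplier vector---is a genuine one, but the paper's own proof passes over it silently, so on this point your write-up is if anything more careful than the original.
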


\begin{proof}
From Theorem \ref{dl:1} it follows that there exists $(\bar{\lambda},\bar{\mu},\bar{\nu})\in \mathbb{R}^p_{++}\times \mathbb{R}^m_{+}\times \mathbb{R}^l$ satisfies \eqref{kkt:1}-\eqref{ys:1}.
Due to $h(x_0)=0$, \eqref{ys:1} yields that
\begin{align*}
\sum_{j=1}^m \bar{\mu}_jg_j( x_0 ) +\sum_{k=1}^l \bar{\nu}_kh_k( x_0 )=0.
\end{align*}
With no loss of generality, we can assume that $\sum_{i=1}^p{\bar{\lambda}_i}=1$
since one can pick up
$\widetilde{\lambda}_i=\frac{\bar{\lambda}_i}{\sum_{i=1}^p{\bar{\lambda}_i}}$ due to $\bar{\lambda}\in \mathbb{R}^p_{++}$.
Consequently, one has $(x_0,\bar{\lambda},\bar{\mu},\bar{\nu})\in \mathcal{F}_M$.
By Theorem \ref{dl:7}, we obtain
\begin{align*}
  \left(\frac{f}{F}\right)(x_0) \nleq \left(\frac{f}{F}\right) ( u ),\,\, \forall\, (u,\lambda,\mu, \nu)\in \mathcal{F}_M.
\end{align*}
It, therefore, implies that $(x_0,\lambda,\mu,\nu)$ is a Pareto efficient solution of MWSD.
The proof is completed.
\end{proof}

\begin{theorem}\label{dl:9}{\rm [Converse duality]}
Let $(u, \lambda, \mu, \nu)\in \mathcal{F}_M$ be a Pareto efficient solution of {\rm MWSD} with $u\in X$. Assume that $\lambda ^{\top }(f-s\ast F)$ is second-order pseudoconvex  at $u$ with respect to $U$, $\mu ^{\top }g$ is second-order quasiconvex  at $u$ with respect to $U$, $h$ is second-order infine  at $u$ with respect to $U$ and Assumption \ref{js:2} holds. Then $u$ is a Pareto efficient solution of {\rm NMFP}.
\end{theorem}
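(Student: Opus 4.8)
The plan is to verify that the point $u$ satisfies all the hypotheses of Theorem \ref{dl:3} and then invoke that theorem directly to conclude that $u$ is a Pareto efficient solution of NMFP. First I would record that the three generalized convexity properties in the statement of Theorem \ref{dl:9} (second-order pseudoconvexity of $\lambda^\top(f-s\ast F)$, second-order quasiconvexity of $\mu^\top g$, and second-order infineness of $h$) are assumed \emph{with respect to $U$}; since $X\subseteq U$, the defining requirement ``for any $x\in X$ there exist $v,w$ \dots'' is a weaker demand, so each of these properties holds a fortiori with respect to $X$, exactly as required by Theorem \ref{dl:3}. Moreover, $s=\frac{f(u)}{F(u)}$ is the same parameter that appears in Theorem \ref{dl:3} once $x_0$ is replaced by $u$.

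The single genuinely new step is to extract the complementarity identity \eqref{cf:1} from MWSD-feasibility together with $u\in X$. Since $u\in X$, I have $g(u)\leqq 0$ and $h(u)=0$; combined with $\mu\in\mathbb{R}^m_{+}$ this gives $\sum_{j=1}^m\mu_jg_j(u)\le 0$, while $h(u)=0$ gives $\sum_{k=1}^l\nu_kh_k(u)=0$. On the other hand, the third constraint defining $\mathcal{F}_M$ reads $\sum_{j=1}^m\mu_jg_j(u)+\sum_{k=1}^l\nu_kh_k(u)\ge 0$. Putting these together forces $\sum_{j=1}^m\mu_jg_j(u)=0$, which is precisely \eqref{cf:1}.

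It then remains to match the stationarity conditions. By Assumption \ref{ass:5.1}, each $f_i$, $F_i$, $g_j$, $h_k$ is G\^ateaux differentiable and regular in the sense of Clarke on $U$, so for every direction $w\in Z$ one has $f_i^\circ(u;w)=\langle f'_{i,G}(u),w\rangle$, and similarly for $g_j$ and $h_k$; hence pairing the first (equality) constraint of MWSD against an arbitrary $w\in Z$ reproduces \eqref{cf:3}. The second constraint of MWSD is, after setting $x_0=u$, verbatim \eqref{cf:2} for all $v\in D(u)$. Thus \eqref{cf:3}--\eqref{cf:1} hold at $u$ with $\lambda\in\mathbb{R}^p_{++}$, $\mu\in\mathbb{R}^m_{+}$, $\nu\in\mathbb{R}^l$, so every assumption of Theorem \ref{dl:3} is in force, and applying it yields that $u$ is a Pareto efficient solution of NMFP.

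I expect the main (indeed essentially the only) obstacle to be the complementarity step: one must exploit the sign information carried by $\mu\geqq 0$, $g(u)\leqq 0$, and $h(u)=0$ to upgrade the inequality constraint $\sum_{j=1}^m\mu_jg_j(u)+\sum_{k=1}^l\nu_kh_k(u)\ge 0$ to the exact equality \eqref{cf:1}. It is worth noting that the hypothesis that $(u,\lambda,\mu,\nu)$ is Pareto efficient for MWSD is not actually used: mere feasibility in $\mathcal{F}_M$ together with $u\in X$ suffices, so no optimality argument for the dual problem is needed.
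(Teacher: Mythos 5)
Your proof is correct and follows essentially the same route as the paper: the paper's own argument for Theorem \ref{dl:9} is in effect a re-run of the proof of Theorem \ref{dl:3} with $x_0=u$, extracting \eqref{cf:3}--\eqref{cf:1} from MWSD-feasibility exactly as you do (the complementarity $\sum_{j}\mu_jg_j(u)=0$ comes from $g(u)\leqq 0$, $h(u)=0$, $\mu \geqq 0$ and the third dual constraint, the stationarity is matched via Clarke regularity, and the Pareto efficiency of the dual point is likewise never used). Your packaging as a formal reduction to Theorem \ref{dl:3} is cleaner; the only point worth making explicit is that Theorem \ref{dl:3} also requires Assumption \ref{js:1}, which is covered since Assumption \ref{ass:5.1}, in force throughout Section \ref{sec:5}, subsumes it.
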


\begin{proof}
   If possible, suppose that $u$ is not a Pareto efficient solution of NMFP. Then,  there exists $\widetilde{x}\in X$ for which
\begin{align*}
  \left(\frac{f}{F}\right)(\widetilde{x}) \leq \left(\frac{f}{F}\right) ( u ).
\end{align*}
Therefore, $f(\widetilde{x})-s\ast F(\widetilde{x})\leq 0= f ( u )-s\ast F ( u )$, and
\begin{align}\label{converse:2}
 \lambda^{\top } \left(f(\widetilde{x})-s\ast F(\widetilde{x}) \right) < \lambda^{\top } \left( f ( u )-s\ast F ( u )\right).
\end{align}
Due to $(u, \lambda, \mu, \nu)\in \mathcal{F}_M$, \eqref{mwd:1} results in
\begin{equation}\label{mcd:1}
\begin{split}
 &\sum_{i=1}^p{\lambda_i(\langle f'_{i,G} \left( u \right), w\rangle - s_i \langle F'_{i,G}\left( u \right), w\rangle ) +\sum_{j=1}^m{\mu_j \langle g'_{j,G}\left( u \right), w\rangle}}\\
 &+\sum_{k=1}^l{\nu_k \langle h'_{k,G}\left( u \right), w\rangle }+\tfrac{1}{2}\sum_{i=1}^p{\lambda_i(f_{i}^{\circ \circ}\left( u;v \right)-s_iF''_i\left( u;v \right)) }\\ &+\tfrac{1}{2}\sum_{j=1}^m{\mu_jg_{j}^{\circ \circ}\left( u;v \right)}+\tfrac{1}{2}\sum_{k=1}^l{\nu_kh_k^{\circ \circ}\left(u ;v\right) }\geq 0,\,
 \forall\, v\in D(u), w \in Z,
\end{split}
\end{equation}
and $\mu^{\top }g(u)+\nu^{\top }h(u)\ge 0$. This together with $u\in X$ yields that
\begin{align}\label{converse:1}
\mu^\top g(u)=0\ge \mu^\top g(x),\,\,  h(u)=h(x)=0,\, \forall \, x\in X.
\end{align}
Since $\mu^{\top }g$ is second-order quasiconvex
 at $u$ with respect to $U$, $h$ is second-order infine  at $u$ with respect to $U$,
 it follows from Proposition \ref{second:convex} and Assumption \ref{js:2}  that   for any $x\in U$,
 there exist $v\in D(u)$ and $ w \in Z$ such that
\begin{align}
~&~ \sum_{j=1}^m{\mu _j \langle g'_{j,G}\left( u \right), w\rangle}+\tfrac{1}{2}\sum_{j=1}^m{\mu_jg_{j}^{\circ \circ}\left( u;v \right)}\le 0 \label{mcd:2} \\
\text{ and } ~&~ \sum_{k=1}^l{\nu _k \langle h'_{k,G}\left( u \right), w \rangle} + \tfrac{1}{2}\sum_{k=1}^l{\nu _kh_{k}^{\circ \circ}\left( u;v \right)}=\nu^\top h(x)-\nu^\top h(u)=0.\label{mcd:3}
\end{align}
Summing \eqref{mcd:1}, \eqref{mcd:2} and \eqref{mcd:3}, it yields that
\begin{equation*}
\sum_{i=1}^p \lambda _i( \langle f'_{i,G}\left( u \right) , w\rangle -s_i \langle F'_{i,G}\left( u \right), w\rangle ) + \tfrac{1}{2}\sum_{i=1}^p\lambda _i(f_{i}^{\circ \circ}\left( u;v \right)-s_iF''_i\left( u;v \right)) \ge 0.
\end{equation*}
By  the second-order pseudoconvexity of $\lambda^{\top }( f-s\ast F)$ at $u$ with respect to $U$,
 one has
\begin{equation*}
  \lambda ^{\top }(f-s\ast F)(x)\geq\lambda ^{\top }(f-s\ast F)(u),\,\,\forall\, x\in U\setminus\{u\},
\end{equation*}
which contradicts \eqref{converse:2}. Hence, the result follows. The proof is completed.
\end{proof}

\begin{remark}
If $\lambda \in \mathbb{R}^p_{++}$ is replaced by $\lambda \in \mathbb{R}^p_{+} \setminus \{0\}$ in MWSD, then we can conclude  the  strong duality and converse duality between the weak efficient solutions of NMFP and  that of MWSD.
\end{remark}

\section{Conclusions}\label{sec:6}

We have presented chain rules of quotient functions involving locally Lipschitz functions in terms of first and second-order directional derivatives, which improves that of \cite{VanSu2023,Feng2019,Feng2018}. A new second-order Abadie-type regular condition has been introduced in terms of Clarke directional derivative and P\'ales-Zeidan second-order directional derivative, which is different from the second-order Abadie-type regular conditions in \cite{Feng2019,Aanchal2023}. Second-order strong KKT conditions for a Borwein-properly efficient solution of NMFP have been established. Based on $s$-MFP,  second-order sufficient optimality conditions for a Pareto efficient solution of NMFP have been obtained under some generalized second-order convexity assumptions. Finally, we have proposed a Mond-Weir-type second-order dual problem of NMFP and obtained the weak, strong and converse duality results between NMFP and its second-order dual problem.

In the lines of the derived results, one can study Schaible-type second-order dual problem \cite{Liang2003} of NMFP, and also can attempt to design algorithms of NMFP via $s$-MFP. The obtained results in this paper can further be extended by using  Dini Hadamard-type second-order generalized directional derivatives.
It is also interesting to extend the results presented in this paper to the vector case when $\mathbb{R}^p_{+}$ is replaced by a general closed convex cone.
\vskip5mm

\noindent{\bf Acknowledgements.}
 This paper was supported by  the
   Natural Science Foundation of China (Nos. 12071379, 12271061),
 the Natural Science Foundation of Chongqing(cstc2021jcyj-msxmX0925,  cstc2022ycjh-bgzxm0097), Youth Project of Science and Technology Research Program of Chongqing Education Commission of China (No. KJQN202201802) and the Southwest University Graduate Research Innovation Program (No. SWUS23058). D. Ghosh is thankful for the research funding CRG (CRG/2022/001347) and MATRICS (MTR/2021/000696), SERB, India.
   J.-C. Yao acknowledges MOST (No. 111-2115-M-039-001-MY2), Taiwan.  The first version of the paper was finished on 10th May, 2023.
\vskip2mm

\noindent {\bf Author contributions.}
All authors contributed equally to this article.
\vskip2mm

\noindent {\bf Compliance with Ethical Standards}
 It is not applicable.

\vskip2mm
\noindent{\bf Competing interests}\rm

No potential conflict of interest was reported by the authors.

\end{document}